\newtheorem{theorem}{Theorem}[section]
\newtheorem{corollary}[theorem]{Corollary}
\theoremstyle{definition}
\newtheorem{definition}[theorem]{Definition}
\newtheorem{example}[theorem]{Example}
\newtheorem{question}[theorem]{Question}
\theoremstyle{remark}
\newcommand{\Hom}[2]{\mathrm{Hom}({#1},{#2})}
\newcommand{\CC}{\mathbb{C}}
\newcommand{\cF}{\mathcal{F}}
\numberwithin{equation}{section}
\begin{document}

\title{On the semiprime smash product question}

\author{Christian Lomp}
\address{Department of Mathematics, FCUP, University of Porto, Rua Campo Alegre 687, 4169-007 Porto, Portugal}
\email{clomp@fc.up.pt}
\thanks{This is a revision of an already published version of this paper. The author would like to thank Susan Montgomery for pointing out a severe mistake in referring to a survey article by M.Cohen from 1985 and not to the paper by M.Cohen and D.Fishman from 1986 where the central problem of this paper had been originally addressed. He would also like to thank her for her reference regarding the eight dimensional non-trivial semisimple Hopf algebra. Furthermore the author would like to express his sincere thanks to Alveri Sant'Ana for his valuable remarks on a preliminary version as well as to the anonymous referee for his comments which improved this paper. This research was funded by the European Regional Development Fund through the programme COMPETE and by the Portuguese Government through the FCT - Funda\c{c}\~{a}o para a Ci\^encia e a Tecnologia under the project PEst-C/MAT/UI0144/2011.}

\subjclass{16W30; 57T05.}
\date{}

\dedicatory{dedicated to John Clark and Patrick Smith.}

\keywords{Hopf algebra, Smash product, module algebras, semiprime rings}

\begin{abstract}
This is a survey article on a question, posed in 1986 by M.Cohen and D.Fishman, whether  the smash product $A\#H$ of a semisimple Hopf algebra and a semiprime left $H$-module algebra $A$ is itself semiprime.
\end{abstract}

\maketitle


\section{The question}
This paper is a survey on the following question which was raised by M.Cohen and D.Fishman in \cite{CohenFishman} (see also \cite{Cohen85}) and which, to 
the best of my knowledge, has not yet been completely answered.
\begin{question}[Cohen-Fishman, 1986]
Is the smash product $A\# H$ semiprime in case  $H$ is a semisimple Hopf algebra acting on a semiprime algebra $A$ ?
 \end{question}
 
Recall that a ring $R$ is semiprime if its prime radical, the intersection of its prime ideals, is zero. Equivalently 
$R$ is semiprime if and only if it has no nilpotent non-zero ideals.

\subsection{Preliminaries}
After  Bergman's article \cite{Bergman} ``Everybody knows what a Hopf algebra is'', Hopf algebras could be considered  a well-established part of algebra.
Nevertheless I shall briefly review Hopf algebras and their action on algebras. 
Standard texts on this subjects are \cites{Sweedler, Abe, Montgomery, Schneider, Kaplansky, BrzezinskiWisbauer}. 
Let  $k$ be a  field of characteristic zero.\footnote{Many results described in this survey are true or have an analogous description in positive characteristic. However to keep the survey simple I will only report on the characteristic $0$ case.}  Unadorned tensors $\otimes$ are considered over $k$. A Hopf algebra is an 
algebra and a coalgebra satisfying certain relations. Hence let us first define the notion of a coalgebra over a field 
$k$.
\begin{definition}
A coalgebra $(C,\Delta, \epsilon)$ is a $k$-vector space $C$ with $k$-linear maps $\Delta:C\rightarrow C\otimes C$  resp. $\epsilon:C\rightarrow k$, called the comultiplication and counit of $C$ respectively, such that the following diagrams are commutative.
ww\end{definition}
I will denote an (associative, unital) algebra $A$ over $k$ by a triple $(A,\mu,\eta)$ where $\mu:A\otimes A\rightarrow A$ is the 
multiplication and $\eta:k\rightarrow A$ is a $k$-linear algebra map with $\eta(1)=1_A$ being the identity of $A$.
\begin{definition}
Given a coalgebra $(C,\Delta, \epsilon)$ and an algebra $(A,\mu,\eta)$ the space of linear maps $\mathrm{Hom}(C,A)$ becomes an algebra via the  convolution product:
\[\xymatrix{f\ast g  =\mu\circ (f\otimes g)\circ \Delta: &C \ar@{->}[r]^{\Delta} & C\otimes C \ar@{->}[r]^{f\otimes g} &A\otimes A\ar@{->}[r]^\mu & A}\]
for all  $f,g \in \mathrm{Hom}(C,A) $, while the  identity element of $\mathrm{Hom}(C,A)$ is $\eta\circ \epsilon$.
\end{definition}

A Hopf algebra $H$ is an algebra and a coalgebra with certain compatibility conditions and the inverse element of the identity map in the convolution algebra $\mathrm{End}(H)$.

\begin{definition} A Hopf algebra $(H,\mu,\eta,\Delta,\epsilon,S)$ over $k$ is an algebra $(H,\mu,\eta)$ and a coalgebra 
$(H,\Delta, \epsilon)$ such that $\Delta$ and $\epsilon$ are algebra maps and 
$id_H$ has an inverse in $(\mathrm{End}(H),\ast, \eta\circ\epsilon)$ which is denoted by $S$ and called the {\it antipode} of 
$H$.
\end{definition}

\begin{example} Let $(H,\mu,\eta,\Delta,\epsilon,S)$ be a finite dimensional Hopf  algebra and $H^*=\Hom{H}{k}$. Then 
$(H^*,\Delta^*, \epsilon^*, \mu^*,\eta^*, S^*)$ is also a Hopf algebra. Where $f^*:V^*\rightarrow W^*$ is the 
transpose map of a linear map $f:W\rightarrow V$ of finite dimensional spaces defined by $f^*(\varphi)=\varphi\circ f$, 
for all $\varphi\in V^*$. Obviously $k^*$ is identified with $k$. In particular this means that for $\eta(1)=1_H$ one 
has that for all $f\in H^*$, $\eta^*(f)=f\circ \eta \in k^*$ which is identified with $f(1_H)$. Moreover if $\{g_1, \ldots, g_n\}$ is a basis for $H$ and $\{p_1, \ldots p_n\}$ is a dual basis for $H^*$, then the comultiplication $\mu^*$ of $H^*$ takes the form
\begin{equation}\label{dualHopfalgebraComultiplication} \mu^*(f) = \sum_{i=1}^n \sum_{j=1}^n  f(g_ig_j) p_i\otimes p_j \qquad \forall f\in H^*.\end{equation}
\end{example}

For this survey, the two most important examples of finite dimensional Hopf algebras are the following ones. 

\begin{example}
Let $G$ be a finite group. 
\begin{enumerate}
\item The group algebra $k[G]$ is a Hopf algebra with 
$$\Delta(g)=g\otimes g,\qquad \epsilon(g)=1, \qquad S(g)=g^{-1} \qquad \forall g\in G.$$
\item The dual group algebra $k[G]^\ast =\Hom{k[G]}{k}$ with dual basis $\{p_g\}_{g\in G}$ is a Hopf algebra 
where
$$\Delta(p_g)=\sum_{h\in G}p_{gh^{-1}}\otimes p_h,\qquad  \epsilon(p_g)=\delta_{e,g}, \qquad  S(p_g)=p_{g^{-1}} \qquad 
\forall g\in G.$$
Here $e$ denotes the identity element of $G$ and $\delta_{x,y}$ is the Kronecker symbol. Note that $p_g \in k[G]^\ast $ is defined as $p_g(h)=\delta_{g,h}$ for any $g,h\in G$. Since the  multiplication of $k[G]^\ast$ is given by the convolution product, the elements $p_g$ are pairwise orthogonal idempotents. Thus as an algebra $k[G]^\ast$ is simply the direct product of $|G|$ copies of  $k$.
\end{enumerate}
\end{example}

\begin{definition}
A finite dimensional Hopf algebra is called \emph{trivial} if it is isomorphic as a Hopf algebra to a group algebra or a dual group algebra.
\end{definition}

\subsection{Hopf algebra actions}
The category $H$-Mod of left $H$-modules is a tensor (or monoidal) category. More precisely  given two left $H$-modules 
$V$ and $W$ their tensor
product $V\otimes W$ becomes again a left $H$-module by the following action: 
$$h\cdot (v\otimes w) =\Delta(h)(v\otimes w) =\sum_{(h)} (h_1\cdot v)\otimes (h_2\cdot w),$$ 
for all $h\in H, v\in V, w\in W.$ The identity object of the category $H$-Mod is $\mathbb{I}=k$, since the natural isomorphisms $V\otimes k \simeq k\simeq k\otimes V$  are morphisms in $H$-Mod, where $H$ acts on $k$ via the counit  $\epsilon$.
An algebra $(A,\mu, \eta)$ in a tensor category $(\mathcal{C},\otimes, \mathbb{I})$ is an object in this category having morphisms
$\mu:A\otimes A\rightarrow A$ and 
$\eta:\mathbb{I}\rightarrow A$ in $\mathcal{C}$ which satisfy the usual associativity and unity constraints.

\begin{definition}
An algebra $A$ in the category of left $H$-modules is called a (left) $H$-module algebra.
\end{definition}

This means that an ordinary $k$-algebra $(A,\mu,\eta)$ is a left $H$-module algebra if it has a left $H$-module action $\cdot: H\otimes A \rightarrow A$, such that 
$\forall h\in H, a,b\in A$:
\begin{equation}\label{modulealgebra1} 
h\cdot (ab) =h\cdot \mu(a\otimes b) =\mu(h\cdot (a\otimes b))=\sum_{(h)} \mu( (h_1\cdot a)\otimes (h_2\cdot b) )  =
\sum_{(h)} (h_1\cdot a)(h_2\cdot b),
\end{equation}
\begin{equation}\label{modulealgebra2}
h\cdot 1_A =h\cdot \eta(1_k) =\eta(h\cdot 1_k) =\eta(\epsilon(h)) =\epsilon(h) 1_A.
\end{equation}

\begin{example}
Let $G$ be a group and $A$ a $k$-algebra. Then the possible 
left $k[G]$-module algebra structures on $A$ correspond to  group homomorphisms $\psi:G\rightarrow \mathrm{Aut}(A)$ from 
$G$ 
to the group of automorphisms of $A$. To see this, note that if $A$ is a left $k[G]$-module algebra, then equations
(\ref{modulealgebra1}) and (\ref{modulealgebra2}) show that for each $g\in G$ the map $\psi(g):A\rightarrow A$ with 
$\psi(g)(a) =g\cdot a$ for all $a\in A$,
is an automorphism of $A$ with inverse $\psi(g^{-1})$. That $\psi$ is a homomorphism of groups follows from $A$ being a 
$k[G]$-module.
It is easy to check that any such group homomorphism $\psi$ will give raise to a $k[G]$-module algebra structure; a particular case is if $\psi(g)=id_A$ for all $g\in G$.
\end{example}

\begin{example}
Let $G$ be a finite group and $A$ a $k$-algebra. Then the possible 
left $k[G]^*$-module algebra structures on $A$ correspond to possible gradings of $A$ by $G$, i.e. $A=\bigoplus_{g\in G} 
A_g$ with 
$A_gA_h\subseteq A_{gh}$. To see this, note that if $A$ is a left $k[G]^*$-module algebra, then define $A_g :=p_g\cdot 
A$. Since the $p_g$'s 
form a complete set of orthogonal idempotents, $A=\bigoplus_{g\in G} A_g$. Moreover since $p_h\cdot A_g =0$ if $h\neq 
g$, one has that 
$A_gA_h\subseteq A_{gh}$ using equation (\ref{modulealgebra1}). Equation (\ref{modulealgebra2}) implies that $1_A\in 
A_e$.
On the other hand, given any grading $A=\bigoplus_{g\in G} A_g$ by $G$ one defines $p_g\cdot a =\pi_g(a)$, for all 
$g\in G, a\in A$, where $\pi_g$
is the canonical projection of $A$ onto $A_g$.  In particular the trivial grading $A=A_e$ and $A_g=\{0\}$, for all 
$g\neq e$, yields a left $k[G]^*$-module algebra structure.
\end{example} 

As seen,  trivial Hopf algebras act either as group actions or as group gradings. A recent result of Etingof and Walton, \cite{EtingofWalton} shows that over an algebraically closed field of characteristic $0$ any action of a semisimple Hopf algebra $H$ on a (commutative) integral domain $A$ is virtually an action of a group algebra:

\begin{theorem}[Etingof-Walton, 2013]\label{EtingofWaltonTheorem}
Let $A$ be an integral domain and $k$ an algebraically closed field of characteristic $0$. For any action of a semisimple Hopf algebra $H$ on $A$ exists a Hopf ideal $I$ of $H$ and 
a group $G$ such that 
$$ I\cdot A =0 \qquad \mbox{ and } \qquad  H/I \simeq k[G].$$
\end{theorem}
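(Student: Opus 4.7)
The plan is to reduce the problem, in two stages, to a structural fact about cocommutative semisimple Hopf algebras, and then invoke a classical theorem. First I would let $I \subseteq H$ denote the sum of all Hopf ideals $J$ with $J \cdot A = 0$; since the sum of Hopf ideals is again a Hopf ideal, $I$ is itself a Hopf ideal, and it is tautologically the largest one annihilating $A$. Replacing $H$ by $\bar H := H/I$, the induced action on $A$ is \emph{inner-faithful}, meaning no nonzero Hopf ideal of $\bar H$ annihilates $A$, and $\bar H$ remains semisimple as a quotient of $H$. It therefore suffices to prove that an inner-faithful action of a semisimple Hopf algebra on a commutative domain forces the Hopf algebra to be a group algebra $k[G]$; the Hopf ideal $I$ then gives the decomposition claimed in the theorem.

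Next I would extend the action to the field of fractions $Q = \operatorname{Frac}(A)$. Because $H$ is finite-dimensional, every $H$-orbit $H \cdot a$ spans a finite-dimensional subspace, from which one checks that the set of nonzero elements of $A$ forms an $H$-stable Ore set; the action therefore extends uniquely to $Q$, makes $Q$ an $H$-module algebra, and remains inner-faithful. We may thus replace $A$ by the field $Q$.

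The crux, and the step I expect to be the main obstacle, is showing that such an inner-faithful action on a field forces $H$ to be cocommutative. Commutativity of $Q$ gives immediately, for every $h \in H$ and $a,b \in Q$,
\[ \sum_{(h)} (h_1 \cdot a)(h_2 \cdot b) \;=\; \sum_{(h)} (h_2 \cdot a)(h_1 \cdot b), \]
so $\mu \circ (\Delta - \Delta^{\mathrm{op}})(h) = 0$ as a map $Q \otimes Q \to Q$. The delicate part is upgrading this elementwise vanishing to the structural identity $\Delta = \Delta^{\mathrm{op}}$ in $H \otimes H$. I would attempt this by iterating the identity on tensor powers $Q^{\otimes n}$, combined with Hopf--Galois theory for the extension $Q^H \subseteq Q$ (which in the semisimple, inner-faithful setting is an $H^*$-Galois field extension of degree $\dim_k H$), to show that the Hopf ideal generated by $(\Delta - \Delta^{\mathrm{op}})(H)$ acts as zero on sufficiently many tensor powers; inner-faithfulness then forces it to vanish, giving cocommutativity.

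Once cocommutativity is established, the Cartier--Kostant--Milnor--Moore theorem (valid because $k$ is algebraically closed of characteristic $0$) identifies any cocommutative semisimple Hopf algebra with a group algebra $k[G]$ for a finite group $G$. Combined with the first reduction this yields $H/I \cong k[G]$ with $I \cdot A = 0$, which is the conclusion of the theorem.
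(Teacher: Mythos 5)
The survey does not actually prove this theorem --- it is quoted from Etingof and Walton \cite{EtingofWalton} --- so there is no internal proof to compare against; I can only assess your argument on its own terms. Your two preliminary reductions are sound and match the discussion surrounding the statement: the sum $I$ of all Hopf ideals annihilating $A$ is again a Hopf ideal, $H/I$ acts inner faithfully and stays semisimple, and the action does extend to the fraction field $Q$ (though the right justification is not that the nonzero elements form an ``$H$-stable Ore set,'' but the Skryabin--Van Oystaeyen extension theorem quoted later in the survey: $Q$ is an Artinian classical ring of quotients of $A$, so the measuring $\rho\colon A\to \mathrm{Hom}(H,A)$ extends uniquely to $Q$, and inner faithfulness is inherited since $A\subseteq Q$).

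The genuine gap is exactly at the step you flag as the crux, and it is not a technical loose end but the entire content of the theorem. Commutativity of $Q$ only gives you that $\mu\circ(\Delta-\Delta^{\mathrm{op}})(h)$ vanishes as an operator on $Q\otimes Q$; the obstruction to cocommutativity lives in $H\otimes H$, whereas inner faithfulness only kills Hopf ideals of $H$ itself, so ``the Hopf ideal generated by $(\Delta-\Delta^{\mathrm{op}})(H)$'' is not an object of the right type and there is no formal mechanism that converts the elementwise vanishing (even on all tensor powers of $Q$) into the structural identity $\Delta=\Delta^{\mathrm{op}}$. The actual Etingof--Walton proof does not establish cocommutativity by such a direct manipulation: after the Hopf--Galois step showing $[L:L^{H}]=\dim H$ for $L=Q$, they base-change to an algebraic closure so that $L$ becomes the algebra of functions on a finite set $X$ with $|X|=\dim H$, and then analyze semisimple Hopf actions on $\mathrm{Fun}(X)$ using the representation theory of $H$ (Frobenius--Perron dimension counts and the structure of commutative separable algebras in the tensor category of $H$-modules) to extract the group $G$. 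That categorical input is what your ``iterate on tensor powers'' step would have to replace, and as written it does not. A minor remark on the last step: the appeal to Cartier--Kostant--Milnor--Moore is correct but heavier than needed --- as the survey itself notes, a cocommutative semisimple Hopf algebra over $k$ is already a group algebra by Larson--Radford duality, since its dual is commutative semisimple with a basis of group-like elements.
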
 
According to the terminology of \cite{EtingofWalton}, a Hopf algebra $H$ acts inner faithfully on an algebra $A$ if there does not exist a non-zero 
Hopf ideal  $J$ of $H$ with $J\cdot A=0$.
It is clear that given any Hopf algebra $H$ acting on $A$, the sum $I$ of all those Hopf ideals $J$ with $J\cdot A=0$ is again a Hopf ideal 
and hence  $H/I$ acts inner faithfully on $A$.  

As pointed out in \cite[Proposition 5.4] {EtingofWalton}, theorem \ref{EtingofWaltonTheorem} can be extended to 
filtered algebras $A$ such that the Hopf action preserves the filtration and such that the associated graded algebra $\mathrm{gr}(A)$ is a  commutative integral domain.

\subsection{Smash product}
\begin{definition}[Smash product] 
The smash product $A\# H$ of a Hopf algebra $H$ and a left $H$-module algebra $A$ is 
defined on the tensor product $A\# H :=A\otimes H$ with multiplication:
$$(a\# h)(b\#g) =\sum_{(h)} a(h_1\cdot b) \# h_2g,$$
and identity $1_A \# 1_H$.
\end{definition}

It is not difficult to show that this multiplication equips $A\# H$ with  a well-defined, associative and unital 
$k$-algebra structure.
For $H=k[G]$ and a left $k[G]$-module algebra, the smash product $A\# H$ is known as the skew group ring of $A$ and $G$ 
and is denoted by $A* G$.
For the trivial action, $g\cdot a =a$ for all $g\in G,a\in A$, one recovers the group algebra $A*G=A[G]$ of $G$ over $A$.
For group gradings, the smash product $A\# k[G]^*$ has been used for instance in \cite{CohenRowen}.
The algebra $A$ becomes a left $A\# H$-module structure by the  action 
$(a\# h) \bullet b =a(h\cdot b)$ for all $a,b\in A, h\in H$. 
The left $A\# H$-submodules of $A$ are precisely the left  ideals of $A$ that 
are stable under the $H$-action. 

\begin{definition}[Subalgebra of Invariants]
Let $M$ be a left $H$-module. The subspace $M^H =\{m\in M \mid h\cdot m =\epsilon(h)m, \:\:\forall h\in H\}$ is called 
the subspace of 
$H$-invariant elements of $M$. For $M=A$ a left $H$-module algebra, the subspace $A^H$ becomes a subalgebra of $A$, 
called the {\it subalgebra of invariants}.
\end{definition}

For a left $A\# H$-module $M$, there is a natural identification of $M^H$ as the set of left $A\# H$-linear maps from 
$A$ to $M$ via the isomorphism:
$$ \mathrm{Hom}_{H}(A,M) \rightarrow M^H \qquad f\mapsto (1)f, \:\:\: \forall f\in \mathrm{Hom}_{H}(A,M),$$
where homomorphisms of left modules are written opposite of scalars. For $M=A$ one obtains $ \mathrm{End}_{A\# H}(A) 
\simeq A^H$.
In particular $(A\# H, A, (A\# H)^H, A^H)$ is the standard Morita context of the left $A\# H$-module $A$ via the 
identification of $A^H$ with $\mathrm{End}_{A\# H}(A)$ and $(A\# H)^H$ with $\mathrm{Hom}_{A\# H}(A,A\# H)$ (see \cite{CohenFishmanMontgomery}).

The origin of the Cohen-Fishman question stemmed from the next two results, which in characteristic $0$ can be stated as follows:

\begin{theorem}[Fisher-Montgomery, 1978; Lorenz-Passman, 1980]\label{FisherMontgomeryTheorem}
Let $A$ be a $k[G]$-module algebra, with $G$ a finite group. Then $A\# k[G]$ is semiprime if $A$ is $G$-semiprime.
\end{theorem}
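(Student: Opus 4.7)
My plan is to argue by contradiction: I would assume $N\subseteq A\#k[G]$ is a nonzero nilpotent ideal and manufacture from it a nonzero $G$-stable nilpotent ideal of $A$, contradicting the $G$-semiprimeness hypothesis.

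The first move is cheap: viewing $A$ as the subring $A\#1$ of $A\#k[G]$, the intersection $I:=N\cap (A\#1)$ is an ideal of $A$, is nilpotent as a subset of $N$, and is $G$-stable because for $a\#1\in I$ and $g\in G$ one has $(1\#g)(a\#1)(1\#g^{-1})=(g\cdot a)\#1\in N\cap(A\#1)$. If $I\neq 0$ we are done, so I may assume $N\cap A=0$ and focus on forcing $N=0$. For this I would run a minimal support argument: choose $\alpha=\sum_{i=1}^n a_i\#g_i\in N\setminus\{0\}$ with $n$ minimal, the $g_i\in G$ distinct and each $a_i\neq 0$, and normalise $g_1=e$ by right-multiplying with $1\#g_1^{-1}$. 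For each $b\in A$ the elements $(b\#1)\alpha=\sum_i(ba_i)\#g_i$, $\alpha(b\#1)=\sum_i a_i(g_i\cdot b)\#g_i$, and their commutator all lie in $N$ with support contained in $\{g_1,\ldots,g_n\}$; by minimality they must vanish as soon as the $g_1$-coefficient vanishes, yielding $ba_1=0\Rightarrow ba_i=0$ for all $i$, $a_1 b=0\Rightarrow a_i(g_i\cdot b)=0$ for all $i$, and an analogous commutation linkage. Conjugating further by $(1\#g)\cdot(-)\cdot(1\#g^{-1})$ produces the $G$-twists of these relations.

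The step I expect to be the main obstacle is converting this rigidity into an explicit nonzero $G$-stable nilpotent ideal of $A$, and it is here that $\mathrm{char}(k)=0$ is essential, through the integral $t:=\tfrac{1}{\abs{G}}\sum_{g\in G}(1\#g)$. This $t$ is an idempotent satisfying $(1\#g)\,t=t=t\,(1\#g)$ for every $g$, so the corner $t(A\#k[G])t$ is naturally identified with $A^G$ via the formula
\[
 t(a\#1)\,t \;=\; \tfrac{1}{\abs{G}}\bigl(t_G(a)\#1\bigr)\,t, \qquad t_G(a):=\sum_{g\in G}g\cdot a.
\]
The ideal $tNt\subseteq t(A\#k[G])t\cong A^G$ is automatically nilpotent, and the plan is to use the minimal-support analysis --- specifically, to form $k$-linear combinations of products $t\,(b\#1)\alpha (b'\#1)\,t$ for varying $b,b'\in A$ and varying conjugates of $\alpha$ --- to guarantee $tNt\neq 0$. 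Each nonzero element of $tNt$ corresponds under the isomorphism to an element $\tfrac{1}{\abs{G}}t_G(c)\in A^G$ with $c$ a concrete $k$-linear combination of products of the $a_i$ and their $G$-translates; the invertibility of $\abs{G}$ then lets one pull this back to produce a nonzero nilpotent $G$-stable ideal of $A$ generated by such $G$-translates of products of the $a_i$, delivering the desired contradiction and closing the argument.
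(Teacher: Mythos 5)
A preliminary remark: the paper does not actually prove Theorem~\ref{FisherMontgomeryTheorem}; it only cites Fisher--Montgomery and Lorenz--Passman, observing that the latter proof rests on $A*G$ being a finite normalizing extension of $A$ (an application of Maschke's theorem). So your sketch has to be measured against the cited arguments. Your preparatory material is correct and standard: $N\cap A$ is indeed a $G$-stable nilpotent ideal, so one may assume $N\cap A=0$; the minimal-support relations ($ba_1=0$ forces $(b\#1)\alpha=0$) are the usual $\Delta$-method; and $t=\frac{1}{|G|}\sum_{g}1\#g$ is an idempotent with $t(a\#1)t=\frac{1}{|G|}(t_G(a)\#1)t$ and corner ring $t(A\#k[G])t\cong A^G$.

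The difficulty is that the proof stops exactly where the theorem's content begins, and it does so at two distinct places. First, the claim $tNt\neq 0$ is only announced as a ``plan''; no argument is given. Since $t\nu t=\frac{1}{|G|}\bigl(t_G(\sum_{g}\nu_g)\#1\bigr)t$ for $\nu=\sum_g\nu_g\#g$, the statement $tNt=0$ means precisely that the nonzero $G$-stable left ideal $L=\{\sum_g\nu_g:\nu\in N\}$ of $A$ has identically vanishing trace, $t_G(L)=0$; ruling this out requires the trace lemma (a $G$-stable one-sided ideal with zero trace is nilpotent when $|G|^{-1}\in k$), which you never state or prove, and your minimal-support relations are never actually connected to it. Second, even granting $tNt\neq 0$, you obtain a nonzero nilpotent ideal of the fixed ring $A^G$, not of $A$. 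Your final sentence asserts that one can ``pull this back'' to a nonzero nilpotent $G$-stable ideal of $A$ generated by the un-averaged elements $c$, but nilpotence of the averages $\frac{1}{|G|}t_G(c)$ inside $A^G$ does not formally imply nilpotence of the ideal of $A$ generated by the $G$-translates of the $c$'s --- that transfer from $A^G$ back to $A$ is again Bergman--Isaacs-type material, of essentially the same depth as the theorem itself. As written, the argument silently assumes what amounts to ``$A$ is $G$-semiprime $\Rightarrow$ $A^G$ is semiprime'' together with its converse transfer. To close the gap you would need to prove the trace lemma and use it at both places, or else follow one of the published routes: the identity-coefficient computation of Fisher--Montgomery, or the Maschke/normalizing-extension argument of Lorenz--Passman that the paper points to.
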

Here $G$-semiprime means that $A$ does not contain any non-zero nilpotent $G$-stable ideals.
For the proof see \cite{FisherMontgomery, LorenzPassman}. Actually their proof deals more generally with algebras not necessarily over fields, such that $A$ does not have $|G|$-torsion. The proof given by Lorenz and Passman uses the fact that the skew group ring $A*G$ is a finite normalizing extension of $A$. 
\begin{theorem}[Cohen-Montgomery, 1984]\label{CohenMontgomeryTheorem}
Let $A$ be a $G$-graded $k$-algebra, with $G$ a finite group.  
Then $A\# k[G]^\ast$ is semiprime if $A$ is $G$-graded semiprime.
\end{theorem}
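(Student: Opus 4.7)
The plan is to mimic the Fisher--Montgomery strategy (Theorem \ref{FisherMontgomeryTheorem}) in the dual setting: from any nonzero nilpotent ideal of $B := A\# k[G]^*$ I would extract a nonzero $G$-graded nilpotent ideal of $A$. First, I would equip $B$ with a natural action of $G$ by algebra automorphisms, namely $g\cdot(a\# p_h):= a\# p_{hg^{-1}}$, arising from the right regular action of $G$ on the dual basis of $k[G]^*$. Using the product $(a\# p_h)(b\# p_l)=a\,b_{hl^{-1}}\# p_l$ (where $b_y$ denotes the $y$-homogeneous component of $b$), one checks that this is a well-defined action by automorphisms, and that the invariants form the subring $B^G = A\# 1 \cong A$ under $a\mapsto a\# 1$.

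Now suppose, towards a contradiction, that $I$ is a nonzero nilpotent ideal of $B$, and set $I^*:=\sum_{g\in G} g\cdot I$. Then $I^*$ is $G$-stable, nonzero, and still nilpotent, because a finite sum of two-sided nilpotent ideals is itself nilpotent (if $J_1^{m_1}=\cdots=J_n^{m_n}=0$ then $(\sum J_i)^{m_1+\cdots+m_n}=0$). The crucial step is to show that $I^*\cap A$ is a nonzero $G$-graded ideal of $A$. For this I would exploit the orthogonal idempotents $e_g:=1\# p_g$ with $\sum_g e_g = 1$: a direct calculation gives, for $x=\sum_{y,h}a_{y,h}\# p_h\in B$ with $a_{y,h}\in A_y$, the identity $e_{yh}\,x\,e_h = a_{y,h}\# p_h$. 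Hence every such ``matrix entry'' $a_{y,h}\# p_h$ of any $x\in I^*$ again lies in $I^*$. Invoking $G$-stability, $g\cdot(a_{y,h}\# p_h)=a_{y,h}\# p_{hg^{-1}}\in I^*$ for every $g\in G$, and summing over $g$ yields $a_{y,h}\# 1\in I^*$, i.e.\ $a_{y,h}\in I^*\cap A$. Since $a_{y,h}\in A_y$ is homogeneous and this holds for each nonzero component of each element of $I^*$, the ideal $I^*\cap A$ of $A$ is nonzero and graded; it is nilpotent because it is contained in $I^*$. This contradicts the assumption that $A$ is $G$-graded semiprime.

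The main technical obstacle is the explicit identification of $B$ with the generalized ``matrix ring'' whose $(g,h)$-block is $A_{gh^{-1}}\# p_h$; once the projectors $e_g x e_h$ are understood, the rest is averaging. An alternative, more conceptual packaging would be to prove by the same matrix-unit argument that $B$ itself is $G$-semiprime, then apply Theorem \ref{FisherMontgomeryTheorem} to conclude that $B\# k[G]$ is semiprime, and finally observe that a hypothetical nonzero nilpotent ideal $I$ of $B$ would produce the nonzero nilpotent ideal $I\# k[G]$ of $B\# k[G]$, reaching the same contradiction.
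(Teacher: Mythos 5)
Your main argument is correct and essentially self-contained, whereas the survey offers no proof of Theorem \ref{CohenMontgomeryTheorem} at all --- it simply defers to the original paper of Cohen and Montgomery. So the comparison is really with that source. Your computations check out: the multiplication $(a\# p_h)(b\# p_l)=ab_{hl^{-1}}\#p_l$ is right, the map $g\cdot(a\#p_h)=a\#p_{hg^{-1}}$ is indeed an action of $G$ by algebra automorphisms of $B=A\#k[G]^*$ (this is exactly the action underlying the Cohen--Montgomery duality $(A\#k[G]^*)*G\simeq M_{|G|}(A)$), the idempotent identity $e_{yh}\,x\,e_h=a_{y,h}\#p_h$ holds because $1_A\in A_e$, and averaging over $G$ then lands each homogeneous ``matrix entry'' in $A\#1\simeq A$, producing a nonzero nilpotent graded ideal of $A$ from any nonzero nilpotent ideal of $B$. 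In effect you have distilled the matrix-ring/duality mechanism of the original proof into a direct idempotent-and-averaging argument, which is arguably more elementary than invoking the duality theorem wholesale.

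One small flaw in your ``alternative packaging'': for a nonzero nilpotent ideal $I$ of $B$ that is not $G$-stable, $I\#k[G]$ is only a one-sided ideal of $B\#k[G]=B*G$, since $(b g)(i h)=b(g\cdot i)gh$ forces $g\cdot I\subseteq I$ for it to absorb left multiplication; you should pass to $I^**k[G]$ with $I^*=\sum_{g\in G}g\cdot I$ as in your main argument. Moreover, once you have shown directly that $B$ is $G$-semiprime, the detour through Theorem \ref{FisherMontgomeryTheorem} is unnecessary: any nonzero nilpotent ideal $I$ of $B$ already yields the nonzero nilpotent $G$-stable ideal $I^*$, contradicting $G$-semiprimeness of $B$ outright.
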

For the proof see \cite{CohenMontgomery}. Here $G$-graded semiprime means that $A$ does not contain any non-zero nilpotent $G$-graded ideals.

Summarizing: theorems \ref{FisherMontgomeryTheorem} and \ref{CohenMontgomeryTheorem} show that Cohen-Fishman's question has a positive answer for  actions of trivial Hopf algebras. 

\subsection{Semisimple Hopf algebra}

Suppose that for a fixed finite dimensional Hopf algebra $H$ the following implication holds for any left $H$-module algebra $A$:
\begin{equation}\label{implication}
A \:\mbox{ semiprime} \qquad \Longrightarrow \qquad A\# H\: \mbox{semiprime.}
\end{equation} 
Considering $A=k$, one concludes that  $H\simeq k\# H$ is semiprime and hence semisimple Artinian. Let me call $H$ \emph{strongly semisimple} if it is finite dimensional and satisfies $(\ref{implication})$ for all left $H$-module algebras $A$.  Hence Cohen-Fishman's question can be reformulated as "Are all semisimple Hopf algebras strongly semisimple ?". Therefore it is important to have a closer look at semisimple Hopf algebras. Larson and Radford characterized in {\cite{LarsonRadford}} semisimple Hopf  algebras over a field of characteristic zero. 
\begin{theorem}[Larson-Radford, 1988]
The following are equivalent for a Hopf algebra $H$ over $k$:
\begin{enumerate}
\item $H$ is a semisimple algebra;
\item $H^*$ is a semisimple algebra;
\item $S^2=id$.
\end{enumerate}
\end{theorem}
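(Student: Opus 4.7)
The plan is to reduce all three conditions to a single scalar invariant, the trace $\mathrm{tr}(S^2)\in k$, via the theory of Hopf integrals; note that $H$ is implicitly finite dimensional, since otherwise $H^*$ carries no natural Hopf structure. First I would invoke Maschke's theorem for Hopf algebras: $H$ is semisimple if and only if there exists a left integral $t\in H$ (an element satisfying $ht=\epsilon(h)t$ for all $h\in H$) with $\epsilon(t)\neq 0$; dually, $H^*$ is semisimple if and only if there is a right integral $\lambda\in H^*$ with $\lambda(1_H)\neq 0$. Since the one-sided integral spaces are one-dimensional, I can normalize so that $\lambda(t)=1$.

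Second, I would establish Radford's trace formula, which expresses $\mathrm{tr}(f)$ for any $f\in\mathrm{End}(H)$ as an integral-theoretic sum built from $t$, $\lambda$ and $S$; the proof proceeds by applying the Fundamental Theorem of Hopf Modules to $H$ viewed as a free right Hopf module of rank one over itself. Specializing $f=S^2$ and simplifying with the integral axioms collapses the right-hand side to the clean identity
\[ \mathrm{tr}(S^2)=\epsilon(t)\,\lambda(1_H). \]
This immediately yields $(1)\Leftrightarrow(2)$, since both are equivalent to $\mathrm{tr}(S^2)\neq 0$, and it also gives $(3)\Rightarrow(1)$: $S^2=\mathrm{id}$ forces $\mathrm{tr}(S^2)=\dim_k H$, which is non-zero because $\mathrm{char}(k)=0$.

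The core implication $(1),(2)\Rightarrow(3)$ rests on Radford's formula for the fourth power of the antipode, which writes $S^4$ as conjugation by a distinguished group-like $g\in H$ composed with the convolution action of a distinguished character $\alpha\in H^*$. Semisimplicity of $H$ forces $H$ to be unimodular, so $g=1_H$; dually, semisimplicity of $H^*$ forces $\alpha=\epsilon$. Hence $S^4=\mathrm{id}$, and $S^2$ is a diagonalizable endomorphism of $H$ with eigenvalues in $\{\pm 1\}$. Sharpening the trace computation so that $\mathrm{tr}(S^2)$ equals \emph{exactly} $\dim_k H$ (rather than merely being nonzero) then forces every eigenvalue of $S^2$ to be $+1$, giving $S^2=\mathrm{id}$.

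The main obstacle is precisely this sharpening: the bare formula $\mathrm{tr}(S^2)=\epsilon(t)\lambda(1_H)$ suffices for the semisimplicity equivalence, but upgrading this to the value $\dim_k H$ in the doubly semisimple case requires a careful normalization of integrals together with the full strength of Radford's $S^4$-formula. The characteristic zero hypothesis is essential at the very end, where it rules out cancellations among the $\pm 1$ eigenvalues of $S^2$ that would otherwise permit $\mathrm{tr}(S^2)=\dim_k H$ without $S^2=\mathrm{id}$.
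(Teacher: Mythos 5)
First, a point of comparison: the paper gives no proof of this theorem at all --- it is quoted as a black box with a citation to Larson and Radford --- so there is nothing in the source to measure your argument against, and I can only assess it on its own terms. Your toolkit is the right one (Sweedler's Maschke-type criterion via integrals, Radford's trace formula, Radford's $S^4$-formula), and several pieces are sound: the normalization $\lambda(t)=1$ and the computation $\mathrm{tr}(S^2)=\epsilon(t)\lambda(1_H)$ are correct; $(3)\Rightarrow(1),(2)$ via $\mathrm{tr}(S^2)=\dim_k H\neq 0$ in characteristic $0$ is correct; and semisimplicity of $H$ and $H^*$ does force both distinguished (un)modular elements to be trivial, giving $S^4=\mathrm{id}$ and hence diagonalizability of $S^2$ with eigenvalues in $\{\pm 1\}$.

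There are, however, two genuine gaps. The first is the sentence deducing $(1)\Leftrightarrow(2)$ ``since both are equivalent to $\mathrm{tr}(S^2)\neq 0$.'' The identity $\mathrm{tr}(S^2)=\epsilon(t)\lambda(1_H)$ shows that $\mathrm{tr}(S^2)\neq 0$ implies \emph{both} $\epsilon(t)\neq 0$ and $\lambda(1_H)\neq 0$, and that the \emph{conjunction} of (1) and (2) implies $\mathrm{tr}(S^2)\neq 0$; it does not show that (1) alone forces $\lambda(1_H)\neq 0$. The implication ``$H$ semisimple $\Rightarrow$ $H^*$ semisimple'' in characteristic $0$ is exactly the content of Larson and Radford's second 1988 paper, is the hardest part of the whole theorem, and is proved there by a separate argument (orthogonality relations for the matrix coefficients of the simple subcoalgebras of a cosemisimple Hopf algebra and a positivity estimate for $\mathrm{tr}(S^2)$ restricted to each of them); your outline collapses it to one clause. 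The second gap is the ``sharpening'' $\mathrm{tr}(S^2)=\dim_k H$. You rightly flag it as the main obstacle, but ``careful normalization of integrals together with the full strength of Radford's $S^4$-formula'' does not deliver it: once $S^4=\mathrm{id}$ is known, the standard route is a block analysis of $H\simeq\bigoplus_i M_{n_i}(k)$, in which blocks swapped by $S^2$ contribute $0$ to the trace and, on a fixed block, Skolem--Noether writes $S^2$ as conjugation by some $u_i$ with $u_i^2$ central, giving $\mathrm{tr}\bigl(S^2|_{M_{n_i}(k)}\bigr)=\mathrm{tr}(u_i)\mathrm{tr}(u_i^{-1})=(p_i-q_i)^2\le n_i^2$ with $p_i+q_i=n_i$; one must then prove separately that equality $\sum_i(p_i-q_i)^2=\sum_i n_i^2$ holds. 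That is a genuine counting/positivity argument, not a choice of normalization, so the two load-bearing steps of the theorem are missing rather than merely deferred.
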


\subsection{Blattner-Montgomery duality}

Let $A$ be a left $H$-module algebra over a finite dimensional Hopf algebra  $H$. The smash product $B=A\# H$ has the structure of a left $H^*$-module algebra given by 
$$f\cdot (a\# h) := \sum_{(h)} a \# f(h_2) h_1, \qquad \forall a\in A, h\in H, f\in H^*.$$
Blattner and Montgomery showed in \cite{BlattnerMontgomery} that $A\# H \# H^* = B\# H^*   \simeq M_n(A),$ where $n=\mathrm{dim}(H)$ and $M_n(A)$ denotes the algebra of $n\times n$-matrices  over $A$. Hence if $H^*$ is strongly semisimple, then the  left $H^*$-module algebra $B=A\# H$ is semiprime provided $B\# H^*\simeq M_n(A)$ is semiprime, i.e. provided $A$ is semiprime.  Cohen-Fishman's question can be therefore rephrased as follows:
\begin{corollary} Every semisimple Hopf algebra is strongly semisimple if and only if  for any semisimple Hopf algebra $H$ and left $H$-module algebra $A$:
$$A \mbox{ is semiprime } \qquad \Longleftrightarrow \qquad A\# H \mbox{ is semiprime.}$$
\end{corollary}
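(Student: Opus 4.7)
The backward direction is immediate: if the stated biconditional holds for every semisimple Hopf algebra $H$ and left $H$-module algebra $A$, then in particular the implication (\ref{implication}) holds, which is exactly the definition of $H$ being strongly semisimple. So the substance is in the forward direction, where I would fix an arbitrary semisimple Hopf algebra $H$ and an arbitrary left $H$-module algebra $A$ and verify both implications of the biconditional separately.

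The direction ``$A$ semiprime $\Rightarrow A\# H$ semiprime'' requires no further work: it is verbatim the defining property of strong semisimplicity applied to $H$, which holds by hypothesis. The substantive direction is ``$A\# H$ semiprime $\Rightarrow A$ semiprime'', and I would handle it by a duality bootstrap. Set $B = A\# H$ and view it as a left $H^*$-module algebra via the action displayed before the corollary. By the Larson--Radford theorem, semisimplicity of $H$ forces semisimplicity of $H^*$, so by hypothesis $H^*$ itself is strongly semisimple. Applying strong semisimplicity of $H^*$ to the semiprime left $H^*$-module algebra $B$ yields that $B\# H^*$ is semiprime. By the Blattner--Montgomery isomorphism stated in the preceding paragraph, $B\# H^* \simeq M_n(A)$ with $n = \dim H$. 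Invoking the standard Morita-type fact that a ring $R$ is semiprime if and only if $M_n(R)$ is semiprime, one concludes that $A$ itself is semiprime.

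In short, there is no genuine obstacle here; the corollary is a formal consequence of three ingredients already assembled in the paper: the definition of strong semisimplicity, the Larson--Radford self-duality of semisimplicity $H \leftrightarrow H^*$, and the Blattner--Montgomery identification $B\# H^* \simeq M_n(A)$. The only ring-theoretic input not stated explicitly earlier is the invariance of semiprimeness under passage to matrix rings, which is standard and which I would cite rather than prove.
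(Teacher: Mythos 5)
Your proof is correct and follows essentially the same route as the paper: the corollary is obtained from the Blattner--Montgomery identification $A\# H\# H^*\simeq M_n(A)$ together with the Larson--Radford theorem guaranteeing that $H^*$ is again semisimple (hence strongly semisimple under the hypothesis), exactly as set up in the paragraph preceding the statement. If anything, your write-up states the duality step in the logically cleaner direction (strong semisimplicity of $H^*$ gives $A\# H$ semiprime $\Rightarrow M_n(A)$ semiprime $\Rightarrow A$ semiprime) and makes explicit the needed fact that semiprimeness passes between $R$ and $M_n(R)$, which the paper leaves implicit.
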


Note that the subalgebra of invariants of $B=A\# H$ is $B^{H^*}=A$. Moreover $A\subseteq B=A\# H$ is what is called a $H^*$-Galois extension, which basically means that $A$ and $B\# H^*$ are Morita equivalent. While in general a $H^*$-Galois extension $A\subseteq B$ cannot be expressed as a smash product, but as a  crossed product, Cohen-Fishman's question could be generalized to $H$-Galois extension $B^{coH}\subseteq B$ with $H$ being semisimple. This more general approach has been considered in \cite{MontgomerySchneider} where in particular the Krull relations, i.e. maps between $\mathrm{Spec}(A)$ and $\mathrm{Spec}(B)$) were studied.

\subsection{Trivial Hopf algebras}
Note that if $H$ is a commutative $n$-dimensional semisimple Hopf algebra, then $H$ has  a complete orthogonal set of idempotents $\{p_1, \ldots, p_n\}$ of $H$ such that $Hp_i$ is one-dimensional. Let  $\{g_1, \ldots, g_n\}$ be the corresponding dual basis in $H^*$. By equation (\ref{dualHopfalgebraComultiplication}) one has that for all $1\leq i \leq n$:
$\Delta_{H^*}(g_i) = \sum_{s,t} g_i(p_sp_t) g_s \otimes g_t = g_i \otimes g_i$, i.e. the basis  $\{g_1, \ldots, g_n\}$ of $H^*$ consists of group-like elements. Hence  $H^*$ is a group algebra. 

Analogously one has for a cocommutative finite dimensional semisimple Hopf algebra $H$ that $H^*$ is commutative and semisimple by Larson and Radford's theorem and hence  $H\simeq {H^{**}}$  is a group  ring.

\begin{corollary}
Let $H$ be any semisimple Hopf algebra over $k$.
\begin{enumerate}
\item If $H$ is commutative, then there exists a group $G$ such that 
$H\simeq k[G]^*$.
\item If $H$ cocommutative, then there exists a group $G$ such that 
$H\simeq k[G]$.
\end{enumerate}
\end{corollary}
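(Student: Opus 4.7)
The plan is to package up the computation already sketched in the paragraph preceding the statement to handle part~(1), and then to derive part~(2) from part~(1) by a double-dualization argument combined with the Larson--Radford theorem.

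For part~(1), assume $H$ is commutative and semisimple of dimension $n$. A finite-dimensional commutative semisimple $k$-algebra (over the algebraically closed $k$ tacitly in force in this paragraph) splits as $H=\bigoplus_{i=1}^n Hp_i$ with $\dim_k Hp_i=1$ and $\{p_1,\ldots,p_n\}$ a complete set of orthogonal idempotents summing to $1_H$. Let $\{g_1,\ldots,g_n\}$ be the dual basis of $H^*$, so $g_i(p_j)=\delta_{ij}$. Using $p_s p_t=\delta_{st}p_s$ in formula~(\ref{dualHopfalgebraComultiplication}) I would compute
$$\Delta_{H^*}(g_i)=\sum_{s,t}g_i(p_sp_t)\,g_s\otimes g_t=g_i\otimes g_i,$$
and $\epsilon_{H^*}(g_i)=g_i(1_H)=g_i(\sum_s p_s)=1$, so each $g_i$ is a nonzero group-like element of $H^*$. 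Since group-like elements in any Hopf algebra are linearly independent, closed under multiplication, and inverted by the antipode, the basis $G:=\{g_1,\ldots,g_n\}$ of $H^*$ is a finite group, and the $k$-linear extension $k[G]\to H^*$ is an isomorphism of Hopf algebras. Dualizing yields $H\simeq H^{**}\simeq k[G]^*$.

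For part~(2), if $H$ is cocommutative then $H^*$ is commutative; by the Larson--Radford theorem quoted in the text, $H$ semisimple forces $H^*$ to be semisimple as well. Part~(1) applied to $H^*$ produces a finite group $G$ with $H^*\simeq k[G]^*$, and dualizing once more gives $H\simeq H^{**}\simeq k[G]^{**}\simeq k[G]$.

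The only delicate point is the computation of $\Delta_{H^*}(g_i)$ from formula~(\ref{dualHopfalgebraComultiplication}): this is essentially the classical observation that the dual of a commutative cosemisimple Hopf algebra is spanned by its characters. A secondary point, which the paragraph preceding the corollary takes for granted, is that the splitting of $H$ into one-dimensional blocks requires the primitive idempotents to generate one-dimensional $k$-subalgebras, which is automatic under the algebraic closure assumption that runs through the survey; if one wishes to drop that hypothesis, one passes to $\bar{k}$, performs the above argument there, and then observes that the resulting group structure descends since the $g_i$ already lie in $H^*$.
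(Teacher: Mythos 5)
Your main argument is the same as the paper's: split the commutative semisimple $H$ into one\-/dimensional blocks spanned by a complete set of primitive orthogonal idempotents, use formula~(\ref{dualHopfalgebraComultiplication}) to see that the dual basis of $H^*$ consists of group-like elements so that $H^*$ is a group algebra, and obtain part~(2) from part~(1) by dualizing via the Larson--Radford theorem. That part of your write-up is correct and only adds welcome detail (the check that $\epsilon(g_i)=1$ and the standard facts about group-likes).

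Your closing ``secondary point,'' however, is wrong: the algebraic closure hypothesis cannot be removed by passing to $\bar{k}$ and descending. The primitive idempotents of $H\otimes\bar{k}$ need not lie in $H$, so their dual basis need not lie in $H^*$, and the descent you propose has nothing to descend. In fact the statement itself fails over a non-algebraically-closed field: take $H=\mathbb{Q}[C_3]$, a commutative (and cocommutative) semisimple Hopf algebra. As an algebra $H\simeq \mathbb{Q}\times\mathbb{Q}(\zeta_3)$ with $\zeta_3$ a primitive cube root of unity, whereas $\mathbb{Q}[G]^*$ is always a product of $|G|$ copies of $\mathbb{Q}$; hence $H\not\simeq\mathbb{Q}[G]^*$ for any group $G$. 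The splitting of $H$ into one-dimensional blocks is precisely where algebraic closure enters, and it is essential; you should either state that hypothesis in the corollary or restrict the claim accordingly, rather than assert it can be dropped.
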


Trivial Hopf algebras are precisely those that are semisimple and commutative or cocommutative. For Cohen-Fishman's question it is therefore important to consider non- commutative, non-cocommutative  semisimple Hopf algebras.

\subsubsection{A non-trivial semisimple Hopf algebra and a non-trivial action}
The smallest example of a non-trivial semisimple Hopf  is the following (see \cite{KacPaljutkin} or \cite{Masuoka95})

\begin{example}[Kac-Paljutkin 1966, Masuoka 1995]
Let $G=C_2\times C_2$ be the Klein group with $C_2$ being the cyclic 
group of order $2$. Let $x$ and $y$ be a pair of generators of $G$. 
Denote by $R=\CC[G]$ the group algebra of $G$ over $\CC$ and let $\sigma$ 
be the involution of $R$ that swaps $x$ and $y$. The element $z^2 - 
\frac{1}{2}(1+x+y-xy)$ is central in the skew polynomial ring
$R[z;\sigma]$. Moreover the Hopf algebra structure of $R$ extends to a 
Hopf algebra structure of the quotient:
$$H_8=R[z; \sigma]/\langle z^2 - \frac{1}{2}(1+x+y-xy)\rangle,$$
where one sets
$$\Delta(z)=\frac{1}{2}(1\otimes 1 + 1\otimes x + y\otimes 1 - y\otimes 
x)(z\otimes z),\qquad S(z)=z, \qquad \epsilon(z)=1.$$
\end{example}
Note that $S=id$ shows that $H_8$ is semisimple by Larson and Radford's theorem.
Etingof and Walton's theorem \ref{EtingofWaltonTheorem} showed that any action of  semisimple Hopf algebra on a commutative domain is virtually given by a group action. If $A$ is a non-commutative domain, then there might exist an action of a semisimple Hopf algebra $H$  that does not act as a trivial Hopf algebra. The following example stems from Kirkman et al. \cite[{Example 7.4}]{kirkman09}:
\begin{example}[A non-trivial action of $H_8$ on the quantum plane]
Let $A=\CC_q[u,v]$ be the quantum plane at the parameter $q$ with $q^2=-1$. The eight-dimensional semisimple Hopf algebra $H_8$ acts on $A$ as follows:
\[\begin{array}{lcrp{10mm}lcrp{10mm}lcr} x\cdot u &=&-u,&&y\cdot u &=& 
u, &&z\cdot u &=& v\\
x\cdot v &=& v, &&y\cdot v &=& -v,&&z\cdot v &=& u.\end{array}\]
Note that
\begin{eqnarray*} z\cdot (uv) &=&
\frac{1}{2}((z\cdot u)(z\cdot v) + (z\cdot u)(xz\cdot v) + (yz\cdot 
u)(z\cdot v)-
(yz\cdot u)(xz\cdot v)) \\
&=& \frac{1}{2}(vu - vu - vu - vu) =-vu \neq vu =(z\cdot u)(z\cdot v)
\end{eqnarray*}
This shows that $z$ does not act as an algebra endomorphism.
\end{example}

\section{Conditions on $A$}
There are two possible ways to tackle Cohen-Fishman's question: by additional assumptions on the left $H$-module algebra $A$ (apart from being semiprime) and by additional conditions on the Hopf algebra $H$ (apart from being semisimple).  I will focus first on additional properties on the module algebra $A$.

\subsection{Separable extensions}

Maschke's theorem says that a group algebra $H=k[G]$ of a finite group $G$ is a semisimple Artinian ring if and only if  $\mathrm{char}(k) \nmid |G|$.  Sweedler proved an analogues theorem for Hopf algebras in \cite{Sweedler69}: 

\begin{theorem}[Sweedler]\label{SweelderTheorem}
The following statements are equivalent for a Hopf algebra over a field $k$:
\begin{enumerate}
\item[(a)] $H$ is a semisimple Artinian Hopf algebra;
\item[(b)] $H$ is a separable $k$-algebra;
\item[(c)] $k$ is a projective left $H$-module;
\item[(d)] $\exists t\in H: \forall h\in H: ht=\epsilon(h)t$ and $\epsilon(t)=1$.
\end{enumerate}
\end{theorem}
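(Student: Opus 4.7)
The plan is to prove the four conditions equivalent via a cycle (a) $\Rightarrow$ (c) $\Rightarrow$ (d) $\Rightarrow$ (a), then slot in (b) via (d) $\Rightarrow$ (b) $\Rightarrow$ (a). The only conceptually nontrivial step is (d) $\Rightarrow$ (a), a Hopf-algebraic version of Maschke's theorem; the remaining implications are essentially bookkeeping.

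First I would handle the easy arrows. (a) $\Rightarrow$ (c) is immediate because every module over a semisimple Artinian ring is projective, in particular $k$ regarded as a left $H$-module via $\epsilon$. For (c) $\Rightarrow$ (d), observe that $\epsilon : H \to k$ is a surjective left $H$-module map; projectivity of $k$ gives an $H$-linear section $\sigma : k \to H$, and setting $t := \sigma(1)$ yields $\epsilon(t) = 1$ together with $ht = h\sigma(1) = \sigma(\epsilon(h)) = \epsilon(h)t$ for every $h \in H$, i.e.\ a normalized left integral.

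For (d) $\Rightarrow$ (a), given any inclusion $W \hookrightarrow V$ of left $H$-modules with a $k$-linear retraction $\pi : V \to W$, I would average $\pi$ against $t$ by setting
$$\tilde{\pi}(v) \;:=\; \sum_{(t)} t_1 \cdot \pi\bigl( S(t_2) \cdot v \bigr).$$
The antipode identity $\sum t_1 S(t_2) = \epsilon(t) 1_H = 1_H$ immediately gives $\tilde{\pi}(w) = w$ for $w \in W$, so $\tilde{\pi}$ is a retraction. The crucial point is $H$-linearity: applying $\Delta$ to the integral relation $ht = \epsilon(h)t$ gives $\sum h_1 t_1 \otimes h_2 t_2 = \epsilon(h)\Delta(t)$, and combining this with coassociativity and the antipode axiom $\sum S(h_1) h_2 = \epsilon(h) 1_H$ yields the identity $\sum h t_1 \otimes S(t_2) = \sum t_1 \otimes S(t_2) h$ in $H \otimes H$; evaluating against the $k$-linear map $a \otimes b \mapsto a \cdot \pi(b \cdot v)$ then produces $h \cdot \tilde{\pi}(v) = \tilde{\pi}(h \cdot v)$. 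Hence every short exact sequence of $H$-modules splits, so $H$ is semisimple as a module over itself; since $1_H$ lies in a finite sub-sum of the decomposition into simples, $H$ has finite length and is therefore semisimple Artinian.

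Finally, separability slots in easily: (b) $\Rightarrow$ (a) is the standard Hochschild result that a separable $k$-algebra is semisimple, while for (d) $\Rightarrow$ (b) the same integral produces the separability idempotent $e := \sum_{(t)} t_1 \otimes S(t_2) \in H \otimes H^{\mathrm{op}}$, with $\mu(e) = 1_H$ automatic and the centralizing property $(h \otimes 1)\,e = e\,(1 \otimes h)$ being precisely the $H\otimes H$ identity obtained above. The main obstacle throughout is thus a single Sweedler-notation manipulation, proving that one identity from the left-integral relation together with coassociativity and the antipode axiom; once it is in hand, both (d) $\Rightarrow$ (a) and (d) $\Rightarrow$ (b) follow immediately and the rest of the theorem is free.
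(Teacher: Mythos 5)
Your proof is correct, and it is the standard argument; note that the paper itself gives no proof of this theorem, citing Sweedler's 1969 article instead. The one genuinely nontrivial point in your cycle --- the identity $\sum_{(t)} ht_1\otimes S(t_2)=\sum_{(t)} t_1\otimes S(t_2)h$ derived from $ht=\epsilon(h)t$, coassociativity and the (anti-)multiplicativity of $S$ --- is exactly the computation the paper does carry out, in the proof of the corollary immediately following the theorem, to show that $\gamma=\sum_{(t)}1\#S(t_1)\otimes_A 1\#t_2$ is a separability idempotent for $A\subseteq A\#H$ (there with the antipode on the first tensor factor, since the paper exploits that a normalized left integral in a semisimple Hopf algebra is two-sided). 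So your route agrees with the technique the paper uses where it does give details, and the remaining implications, including deducing Artinianity from $1_H$ lying in a finite sub-sum, are handled correctly.
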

The element $t\in H$ such that $ht=\epsilon(h)t$ for all $h\in H$ is called a left integral in $H$.
Such integrals exists in a Hopf algebra if and only if the Hopf algebra is finite dimensional (see \cite{Pareigis, Lomp04, LarsonSweedler}). In the case of a group algebra $H=k[G]$, an integral corresponds to (a scalar multiple) of $t=\sum_{g\in G} g$. Note that a left integral $t$ is also a right integral in case $H$ is semisimple and hence $t$ is a central idempotent of $H$. The condition $(d)$ of Sweedler's theorem can be seen as a generalization of Maschke's theorem, because $\epsilon(t)=|G|1_H$ is non-zero if and only if  $\mathrm{char}(k)\nmid |G|$.

A generalization of a separable $k$-algebra is the notion of a separable ring extension that was introduced by Hirata and Sugano in \cite{HirataSugano}:

\begin{definition}[Hirata-Sugano, 1966]
A ring extension $R\subseteq S$ is \emph{separable} if the multiplication map $\mbox{mult}: S{\otimes_R} S \rightarrow S$ splits  as $S$-bimodule.
\end{definition}
Equivalently $R\subseteq S$ is a separable ring extension if there exists an element $\gamma\in {S{\otimes_R}S}$ that  is $S$-centralizing, i.e. $s\gamma =\gamma s$ for all $s\in S$, and that satisfies $\mbox{mult}(\gamma)=1$.
An element $\gamma$ in $S{\otimes_R} S$ with these properties is called a {\it separable idempotent}.

A characterization of semisimple Hopf algebras in terms of separable extensions is given as follows:
\begin{corollary}
$H$ is semisimple if and only if $A\subseteq A\# H$ is a separable extension for any left $H$-module algebra $A$.
\end{corollary}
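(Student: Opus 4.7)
The plan is to handle the two implications separately. For the implication ``$A\subseteq A\#H$ separable for all $A$ $\Longrightarrow$ $H$ semisimple'', I would specialise to $A=k$ with the trivial action $h\cdot c:=\epsilon(h)c$. Then $A\#H\simeq H$ canonically and the assumption yields separability of $k\subseteq H$, i.e.\ that $H$ is a separable $k$-algebra; by Sweedler's theorem~\ref{SweelderTheorem} this is equivalent to semisimplicity.

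For the converse, assume $H$ is semisimple and let $A$ be any left $H$-module algebra. Fix a left integral $t\in H$ with $\epsilon(t)=1$, available by Sweedler's theorem. I propose as separability idempotent
\[
\gamma\;:=\;\sum_{(t)}(1_{A}\# t_{(1)})\otimes_{A}(1_{A}\# S(t_{(2)}))\in(A\#H)\otimes_{A}(A\#H),
\]
modelled on the classical separability idempotent $\sum t_{(1)}\otimes S(t_{(2)})$ of $H$ over $k$. Multiplicativity $\mathrm{mult}(\gamma)=1_{A}\# 1_{H}$ is immediate from the antipode axiom $\sum t_{(1)}S(t_{(2)})=\epsilon(t)1_{H}$.

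For the $(A\#H)$-centralising condition, I would verify $z\gamma=\gamma z$ only on the two generating classes $z=1\# h$ with $h\in H$ and $z=a\# 1$ with $a\in A$. The first case pulls back along the canonical $k$-linear map $H\otimes H\to(A\#H)\otimes_{A}(A\#H)$, $x\otimes y\mapsto (1\# x)\otimes_{A}(1\# y)$, to the equality $\sum ht_{(1)}\otimes S(t_{(2)})=\sum t_{(1)}\otimes S(t_{(2)})h$ in $H\otimes H$. This is the classical fact that $\sum t_{(1)}\otimes S(t_{(2)})$ is a separability idempotent of $H$ over $k$, deducible from the integral identity $\sum h_{(1)}t_{(1)}\otimes h_{(2)}t_{(2)}=\epsilon(h)\Delta(t)$ combined with the antipode axioms (and, in the semisimple case, the coincidence of left and right integrals that follows from $S^{2}=\mathrm{id}$).

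The case $z=a\# 1$ is the main technical obstacle. The plan is to expand $(1\# S(t_{(2)}))(a\# 1)=\sum(S(t_{(3)})\cdot a)\# S(t_{(2)})$ using the smash multiplication together with $\Delta(S(h))=\sum S(h_{(2)})\otimes S(h_{(1)})$, then to factor the right tensorand as $(S(t_{(3)})\cdot a\# 1)(1\# S(t_{(2)}))$ so that the $A$-factor may be pulled across $\otimes_{A}$ onto the left tensorand, and finally to match the resulting expression against $(a\# 1)\gamma=\sum(a\# t_{(1)})\otimes_{A}(1\# S(t_{(2)}))$ by invoking the module-algebra axiom, iterated coassociativity, and one further application of the antipode axiom. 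The principal difficulty lies in the bookkeeping with three- and four-fold Sweedler expansions interacting with the $A$-balanced tensor product; once this is carried through, both conditions on $\gamma$ are established and the separability of $A\subseteq A\#H$ follows.
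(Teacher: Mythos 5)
Your reduction of the forward implication to $A=k$ is fine (the paper in fact only proves the converse direction, so this is a welcome addition), and your verifications of $\mathrm{mult}(\gamma)=1\#1$ and of the $H$-centralizing condition for $z=1\#h$ are correct: the identity $\sum ht_1\otimes S(t_2)=\sum t_1\otimes S(t_2)h$ does follow from $\sum h_1t_1\otimes h_2t_2=\epsilon(h)\Delta(t)$ and the antipode axiom alone. The problem is precisely the case $z=a\#1$ that you defer, and it is not merely bookkeeping: with your choice $\gamma=\sum (1\#t_1)\otimes_A(1\#S(t_2))$ the computation you outline terminates at
\begin{equation*}
\gamma(a\#1)\;=\;\sum_{(t)}\bigl((t_1S(t_4))\cdot a\bigr)\# t_2\;\otimes_A\;1\# S(t_3),
\end{equation*}
and here the antipode axiom is of no use, because the two legs $t_1$ and $S(t_4)$ that would have to collapse to $\epsilon(\,\cdot\,)1$ are \emph{not adjacent} in the Sweedler expansion --- $t_2$ and $t_3$ sit between them. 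By contrast, the paper's element $\gamma=\sum(1\#S(t_1))\otimes_A(1\#t_2)$ (antipode on the \emph{first} leg) leads to $\sum (S(t_2)t_3\cdot a)\#S(t_1)\otimes_A 1\#t_4$, where $S(t_2)t_3=\epsilon(t_2)1$ is exactly the antipode axiom on adjacent legs, and the argument closes formally.

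Your $\gamma$ is in fact also a separability idempotent, but proving $\sum t_1S(t_4)\otimes t_2\otimes t_3=1\otimes\Delta(t)$ requires genuinely more than ``iterated coassociativity and one further application of the antipode axiom'': one needs that $\Delta(t)$ is a cocommutative element, $\tau\Delta(t)=\Delta(t)$ (so that $\Delta^{(3)}(t)$ is invariant under cyclic permutation of its legs, which moves $S(t_4)$ next to $t_1$ and then allows the collapse $\sum t_2S(t_1)=\epsilon(t_1)1$, itself only valid because $S^2=\mathrm{id}$). The cocommutativity of the integral is dual to the trace property $\lambda(xy)=\lambda(yx)$ of the integral on $H^*$, which holds in the semisimple characteristic-zero setting via Larson--Radford and Radford's formula, but it is a theorem about semisimple Hopf algebras, not a formal Hopf-algebra identity. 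So either import these facts explicitly, or --- more economically --- put the antipode on the left tensor factor as the paper does, after which the whole verification is a two-line consequence of the integral property and the antipode axiom.
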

\begin{proof}
Let $H$ be a semisimple Hopf algebra and let $t$ be an integral in $H$ with $\epsilon(t)=1$ from  theorem \ref{SweelderTheorem}(d). 
Let $\gamma=\sum_{(t)} {1\#S(t_1)}\: {\otimes_A} \: {1\# t_2} \in ({A\# H})\: {\otimes_A} \: ({A\# H}).$ 
If $\mu$  denotes the multiplication of $A\# H$, then
$$\mu(\gamma)=\sum_{(t)} (S(t_2)\cdot 1_A)\# S(t_1)t_3 
=1_A\# \sum_{(t)} S(t_1)\epsilon(S(t_2)) t_3  
=1_A\# \epsilon(t)1_H = 1_A \# 1_H.$$
Given an element $h\in H$ one has 
$h\otimes \Delta(t) =\sum_{(h,t)} h_1\otimes t_1h_2 \otimes t_2h_3$ (see for example \cite{Lomp04}). Thus 
$$h\sum_{(t)} S(t_1) \otimes t_2 =\sum_{(h,t)} h_1S(t_1h_2) \otimes t_2h_3 = \sum_{(t)} S(t_1) \otimes t_2h .$$
i.e. $\gamma(1\# h) =(1\# h)\gamma$. Also for any $a\in A$ one has
\begin{eqnarray*}\gamma a &=& \sum_{(t)} 1\# S(t_1) {\otimes_A} (1\#t_2)(a\#1)\\
&=& \sum_{(t)} (1\# S(t_1))(t_2\cdot a\# 1) {\otimes_A} 1\# t_3
=\sum_{(t)} ((t_2 S(t_3)\cdot a) \# S(t_1) {\otimes_A} 1\#t_4
 =a \gamma
\end{eqnarray*}
Hence $\gamma$ is a separable idempotent for $A\subseteq A\# H$.
\end{proof}

\subsection{Von Neumann regular algebras}
Separable extensions $R\subseteq S$ have the good property that any short exact sequence of left $S$-modules that splits as left $R$-modules also split as left $S$-modules (see \cite[Proposition 2.6]{HirataSugano}). Hence any left $S$-module that is projective as left $R$-module is also projective as left $S$-module by \cite[Proposition 1.6]{HirataSugano}. Moreover if $M$ is a left $S$-module that is flat as left $R$-module, then $M\simeq \lim P_\lambda$ for some finitely generated projective left $R$-modules $P_\lambda$. The modules $S{\otimes_R} P_\lambda$ are finitely generated projective left $S$-modules and hence 
$$S {\otimes_R} M \simeq  \lim (S {\otimes_R} P_\lambda)$$
is a flat left $S$-module. The map $\varphi: S\otimes_R M \rightarrow M$ with $s\otimes m = sm$ is left $S$-linear and splits as left $R$-module map with retraction map sending  $m$ to $1 {\otimes_R} m$ for all $m\in M$. Since $R\subseteq S$ is separable,  $\varphi$ also splits  as left $S$-module map, i.e. $M$ is isomorphic to a direct summand of a flat left $S$-module and therefore itself flat as left $S$-module.

This shows that if $R \subseteq S$ is separable and $R$ is either semisimple Artinian or von Neumann regular, then so is $S$.

\begin{corollary}\label{semisimple}
Let $H$ be a semisimple Hopf algebra acting on $A$. 
\begin{enumerate}
\item If $A$ is von Neumann regular, then $A\# H$  is von Neumann regular. 
\item If $A$ is semisimple Artinian, then $A\# H$  is semisimple Artinian.
\end{enumerate}
\end{corollary}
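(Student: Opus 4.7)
The plan is to invoke directly the two results established in the immediately preceding discussion: first, the corollary showing that $A\subseteq A\#H$ is a separable ring extension whenever $H$ is a semisimple Hopf algebra; and second, the paragraph following that corollary which recorded that if $R\subseteq S$ is separable and $R$ is either von Neumann regular or semisimple Artinian, then $S$ inherits the same property. Granted these two statements, both parts of the corollary should drop out essentially as one-line consequences, so my task is mainly to verify that the module-theoretic transfers go in the right direction.

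For part (1), I would argue as follows. If $A$ is von Neumann regular then every left $A$-module is flat, so given any left $A\#H$-module $M$, its restriction to $A$ along the inclusion $A\hookrightarrow A\#H$ is flat over $A$. The separability argument spelled out in the text (constructing a splitting of $\varphi\colon (A\#H)\otimes_A M\to M$ as a left $A\#H$-module map by means of the separable idempotent $\gamma$) then upgrades this to flatness over $A\#H$. Since every left $A\#H$-module is thereby flat, $A\#H$ is von Neumann regular.

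For part (2), the argument runs in parallel with projectivity in place of flatness, using the statement quoted from \cite[Proposition 1.6]{HirataSugano}: since $A$ is semisimple Artinian every left $A$-module is projective, so every left $A\#H$-module is projective over $A$ by restriction, and hence projective over $A\#H$ by the separable extension transfer. In particular $A\#H$ itself is a projective module over every quotient of itself, which forces $A\#H$ to be semisimple in the Wedderburn sense and therefore semisimple Artinian.

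There is no real technical obstacle here: both conclusions follow immediately from the preparatory material. The only point worth highlighting is that the separability of $A\subseteq A\#H$ is produced by the integral $t\in H$ with $\epsilon(t)=1$ guaranteed by Sweedler's theorem \ref{SweelderTheorem}, so the semisimplicity hypothesis on $H$ enters precisely at that point and nowhere else in the present argument.
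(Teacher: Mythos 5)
Your proposal is correct and follows exactly the route the paper takes: separability of $A\subseteq A\#H$ (via the integral $t$ with $\epsilon(t)=1$) combined with the transfer of projectivity and flatness across separable extensions recorded in the paragraph preceding the corollary. The paper gives no further argument beyond that preparatory discussion, so your one-line deductions match its proof.
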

The second implication was first proved by M.Cohen and D.Fishman in \cite{CohenFishman} while the von Neumann regular case appeared in \cite{LompThesis}.

\subsection{Classical ring of quotient}
Corollary \ref{semisimple} showed that Cohen-Fishman's question has a positive answer for semiprime Artinian module algebras $A$. In this section I will recall a theorem by Skryabin and van Oystaeyen  from \cite{SkryabinVanOystaeyen}  that says that the same is true if $A$ is semiprime Noetherian. A natural question arises whether, in case $A$ embeds into an semisimple Artinian overring $Q$,  the Hopf action on $A$ extend to the overring  $Q$. Then  $Q\# H$ would be semiprime  as seen above and one intends to deduce that $A\# H$ is semiprime as well. Obviously the first choice for such overrings $Q$ are suitable rings of quotients of $A$.  Skryabin and van Oystaeyen showed in \cite{SkryabinVanOystaeyen} that Cohen-Fishman's question has a  positive answer if $A$ is a Noetherian semiprime left $H$-module algebra. Their result is based on the following theorem, that says that for an algebra $A$ with  an Artinian classical ring of quotient $Q$, any Hopf algebra action $H$ on $A$ can be extended to $Q$. Recall that the classical ring of quotients $Q$ of $A$, if exists, satisfies the following universal property: if $\varphi: A\rightarrow S$ is a ring homomorphism from $A$ to a ring $S$ such that for any non-zero divisor $a\in A$, the image $\varphi(a)$ is invertible in $S$, then there exists a unique ring homomorphism $\overline{\varphi}:Q\rightarrow S$ such that $\varphi_{|_A} = \overline{\varphi}_{|_A}$.

\begin{theorem}[Skryabin - Van Oystaeyen, 2006] If $A$ has a right Artinian classical ring of quotient $Q$, then any left 
Hopf module action on $A$ extends to $Q$.
\end{theorem}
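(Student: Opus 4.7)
The plan is to extend the $H$-action on $A$ by extending its equivalent right $H^*$-coaction. Since $H$ is finite dimensional of dimension $n$, the left $H$-module algebra structure on $A$ is the same data as a $k$-algebra homomorphism $\rho:A\to A\otimes H^*$, given in dual bases by $\rho(a)=\sum_{i=1}^{n}(h_i\cdot a)\otimes p_i$. Observe that $Q\otimes H^*$ is right Artinian, being a finitely generated (free) right module over the right Artinian ring $Q$. By the universal property of the classical right ring of quotients, the composite $A\to A\otimes H^*\hookrightarrow Q\otimes H^*$ extends to a ring homomorphism $\bar\rho:Q\to Q\otimes H^*$ as soon as $\rho(s)$ is a unit of $Q\otimes H^*$ for every regular $s\in A$.

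In a right Artinian ring, an element is a unit if and only if it is left regular. Any $y\in Q\otimes H^*$ can be written as $y=x(t\otimes 1)^{-1}$ with $x\in A\otimes H^*$ and $t\in A$ regular, by using the right Ore condition on $A$ to clear a common denominator of the finitely many coefficients of $y$ in a basis of $H^*$. Consequently $\rho(s)y=0$ in $Q\otimes H^*$ reduces to $\rho(s)x=0$ in $A\otimes H^*$, and the task becomes: show that $\rho$ sends regular elements of $A$ to left regular elements of $A\otimes H^*$.

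The heart of the argument is the translation map $\tilde\rho:A\otimes H^*\to A\otimes H^*$ defined by $\tilde\rho(a\otimes p):=\rho(a)(1\otimes p)$. A direct computation gives $\tilde\rho((b\otimes 1)y)=\rho(b)\tilde\rho(y)$, so $\tilde\rho$ intertwines the standard left $A$-action on the source with the $\rho$-twisted left $A$-action on the target. Granting that $\tilde\rho$ is bijective, the target becomes a free left $A$-module of rank $n$ under the $\rho$-action, and left multiplication by $\rho(s)$ on it corresponds to multiplication by $s$ on a free left $A$-module. Such multiplication is injective precisely when $s$ is left regular in $A$, finishing the key step.

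Once $\bar\rho$ is constructed, the coassociativity $(\bar\rho\otimes\mathrm{id})\bar\rho=(\mathrm{id}\otimes\Delta^*)\bar\rho$ and counit $(\mathrm{id}\otimes\epsilon^*)\bar\rho=\mathrm{id}_Q$ identities propagate from $\rho$ on $A$ to $\bar\rho$ on $Q$ by the uniqueness clause in the universal property: in each identity both sides are ring homomorphisms out of $Q$ agreeing on $A$. Thus $\bar\rho$ is a coaction on $Q$ and dualizes to an $H$-module algebra structure on $Q$ via $h\cdot q:=(\mathrm{id}_Q\otimes\mathrm{ev}_h)(\bar\rho(q))$, extending the original action on $A$. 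I expect the main obstacle to be the bijectivity of the translation map $\tilde\rho$: this is a Hopf-algebraic identity that genuinely uses the finite dimensionality of $H$ and the invertibility of its antipode, and it is the step where the argument draws on more than classical localization theory.
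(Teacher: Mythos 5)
Your proposal follows essentially the same route as the paper's sketch: for finite-dimensional $H$ your coaction $\rho:A\to A\otimes H^*$ is exactly the measuring map $\rho:A\to\mathrm{Hom}(H,A)$ into the convolution algebra, and in both arguments the point is to verify the hypothesis of the universal property of $Q$ by showing that $\rho(s)$ is a unit of a right Artinian convolution-type algebra, which reduces to showing it is not a (left) zero divisor there. The step you leave open, the bijectivity of the translation map $\tilde\rho$, is not actually an obstacle and needs neither finite dimensionality nor invertibility of the antipode: under the identification $A\otimes H^*\simeq\mathrm{Hom}(H,A)$ it becomes $\Phi(f)(h)=\sum_{(h)}h_1\cdot f(h_2)$, whose two-sided inverse is $\Psi(g)(h)=\sum_{(h)}S(h_1)\cdot g(h_2)$, by the antipode identities $\sum_{(h)}S(h_1)h_2=\epsilon(h)1_H=\sum_{(h)}h_1S(h_2)$ combined with the module axiom. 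With that inserted, your argument for finite-dimensional $H$ is complete: the intertwining relation $\tilde\rho\bigl((b\otimes 1)y\bigr)=\rho(b)\tilde\rho(y)$, the clearing of denominators via the right Ore condition, the fact that a left regular element of a right Artinian ring is a unit, and the propagation of the coassociativity and counit identities to $\bar\rho$ by the uniqueness clause are all correct.

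The one genuine discrepancy with the statement is that you assume $\dim H=n<\infty$ from the outset, while the theorem as stated (and as sketched in the paper) imposes no such hypothesis. For general $H$ one replaces $H$ by its finite-dimensional subcoalgebras $C$ — legitimate because a measuring depends only on the coalgebra structure and $H$ is the union of such $C$ — and then $\mathrm{Hom}(C,Q)$ is Artinian and the same unit-versus-regular strategy applies. Note, however, that a subcoalgebra need not be stable under $S$, so your translation-map trick does not transfer verbatim to $\mathrm{Hom}(C,Q)$, and the non-zero-divisor step in the infinite-dimensional case requires a different and more delicate argument; this is precisely where the actual proof of Skryabin and Van Oystaeyen does more work. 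For the survey's intended application ($H$ semisimple, hence finite dimensional) your restriction is harmless.
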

\begin{proof}
I will shortly sketch the ideas of their proof. First of all an algebra $A$ is a left $H$-module algebra if and only if $A$ is a left $H$-module and $H$ ``measures" $A$, i.e. equations (\ref{modulealgebra1}) and (\ref{modulealgebra2}) are satisfied. The measuring of $H$ to $A$ can be also expressed through the existence of an algebra homomorphism
$$ \rho: A\rightarrow \mathrm{Hom}(H,A)$$ 
from $A$ to the convolution algebra $\mathrm{Hom}(H,A)$. The $H$-module action is then given by $h\cdot a := \rho(a)(h)$, for $a\in A, h\in H$. In order to obtain a measuring of $H$ on $Q$, Skryabin and Van Oystaeyen considered
the following diagram:
\[\xymatrix{
A \ar@{->}[rr]^{\rho} \ar@{^{(}->}[d] &&\mathrm{Hom}(H,A)\ar@{^{(}->}[d] \\
 Q \ar@{-->}[rr]^{\exists! \rho'} &&\mathrm{Hom}(H,Q) 
}\]
To prove the existence of the unique algebra map $\rho':Q\rightarrow \mathrm{Hom}(H,Q) $ one uses the universal property of $Q$ showing that $\rho(u)$ is invertible in $\mathrm{Hom}(H,Q)$ for any non-zero divisor $u\in A$. In order to do so, $H$ is replaced by a finite dimensional subcoalgebra $C$. This is possible because  a measuring only depends on the coalgebra structure of $H$ and furthermore because  $H$ is the union of its finite dimensional subcoalgebras $C$. Since $Q$ is Artinian, $\mathrm{Hom}(C,Q)$ is also Artinian and hence it is enough to show that $\rho(u)$ is not a zero divisor for any non-zero divisor $u\in A$. In a final step it is proven that the measuring given by $\rho'$ is actually a left $H$-module action on $Q$.
\end{proof}
Since a Noetherian semiprime algebra $A$ has a semisimple Artinian classical algebra of quotient $Q$, any action of a (semisimple) Hopf algebra $H$ extends to $Q$. By Corollary \ref{semisimple} $Q\# H$ is semisimple Artinian and also an Ore localization of $A\# H$. Thus $A\# H$ is semiprime.
\begin{corollary}[Skryabin - Van Oystaeyen, 2006]\label{GoldieTheorem}
If $A$ is semiprime right Noetherian and $H$ semisimple, then $A\# H$ is semiprime.
\end{corollary}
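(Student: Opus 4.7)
The approach is to embed $A\# H$ into a semiprime overring obtained from $A$ by localization, and then transport the property back along this embedding; this reduces the problem to the semisimple Artinian case handled in Corollary \ref{semisimple}. The previous theorem of Skryabin and Van Oystaeyen does the heavy lifting, so the rest of the argument is essentially bookkeeping.

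First I would invoke Goldie's theorem: since $A$ is semiprime right Noetherian, its classical right ring of quotients $Q$ exists and is semisimple Artinian. By the Skryabin--Van Oystaeyen theorem, the left $H$-module algebra structure on $A$ extends uniquely to $Q$, so the inclusion $A\subseteq Q$ becomes an inclusion of left $H$-module algebras. Corollary \ref{semisimple}(2) then yields that $Q\# H$ is semisimple Artinian, and hence in particular semiprime.

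The final step is to pass semiprimeness from $Q\# H$ back to $A\# H$. The inclusion $A\hookrightarrow Q$ induces an inclusion $A\# H\hookrightarrow Q\# H$ by tensoring over $k$ with $H$. The goal is to exhibit $Q\# H$ as the right Ore localization of $A\# H$ at the set $\mathcal{S}=\{s\# 1\mid s\text{ regular in }A\}$. Once that is in place, any nilpotent ideal $I$ of $A\# H$ extends to a nilpotent ideal $I(Q\# H)$ of the semiprime ring $Q\# H$, forcing $I(Q\# H)=0$; since $1\in Q\# H$ and $I\subseteq I(Q\# H)$, we conclude $I=0$, so $A\# H$ is semiprime.

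The main obstacle is verifying the Ore condition for $\mathcal{S}$ in $A\# H$ and identifying the resulting localization with $Q\# H$. The delicate point is that $s\# 1$ does not commute with $1\# h$; instead one has $(1\# h)(s\# 1)=\sum_{(h)}(h_1\cdot s)\# h_2$, and one must find, for any element of $A\# H$ and any $s\in\mathcal{S}$, a common right denominator in $\mathcal{S}$ that absorbs the finitely many translates $h_1\cdot s\in Q$ appearing in such expansions. This is precisely where the right Noetherian hypothesis on $A$ (via the right Ore condition for $\mathcal{S}$ in $A$) and the extended $H$-action on $Q$ combine: since only finitely many coalgebra components $h_1$ occur, standard finite intersections of denominators inside $A$ suffice, and the identification with $Q\# H$ follows from the universal property of Ore localization.
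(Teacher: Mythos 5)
Your argument is essentially the paper's own proof: Goldie's theorem gives the semisimple Artinian classical quotient ring $Q$, the Skryabin--Van Oystaeyen theorem extends the $H$-action to $Q$, Corollary \ref{semisimple}(2) makes $Q\# H$ semisimple Artinian, and one concludes by recognizing $Q\# H$ as an Ore localization of $A\# H$. You merely spell out the verification of the Ore condition for $\{s\#1\}$ (and the transport of semiprimeness back along the localization, where one should really quote the easy direction of Goldie's theorem for a right order in a semisimple ring rather than claim $I(Q\#H)$ is literally a nilpotent ideal), details the paper leaves implicit.
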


\begin{example}
Skryabin and Van Oystaeyen's result Corollary \ref{GoldieTheorem} applies in particular to the non-trivial action of $H_8$ on the quantum plane $A=\CC_q[u,v]$ for $q^2=-1$, showing that 
$\CC_q[u,v] \# H_8$ is semiprime.
\end{example}

\subsection{Gabriel localization}
It is natural to ask, when a Hopf algebra action extends to a quotient algebra of $A$ obtained by other methods than by Ore localization. For example when does the $H$-action extends to a localization $Q=A_{\cF}$ with respect to a Gabriel filter $\cF$? Recall that a filter $\cF$ of left ideals of a ring $A$ is called a Gabriel filter if the following two conditions are fulfilled:  
\begin{enumerate}
\item[(i)] $Ia^{-1}\in \cF$ for all $I\in \cF$ and $a\in A$;
\item[(ii)] $\forall {_AJ}\subseteq {_AA}: [ \exists I\in \cF, \forall a\in I: Ja^{-1}\in \cF] \Rightarrow J\in \cF$,
\end{enumerate}
where $Ja^{-1}=\{b\in A: ba\in J\}$. The algebra $A$ becomes a topological ring with respect to $\cF$, where the elements of $\cF$ are considered the neighborhoods of zero of the topology.

\begin{theorem}[Montgomery, 1993, Selvan 1994, Sidorov 1996]
Let $\cF$ be a Gabriel filter on a left $H$-module algebra $A$ over some Hopf algebra $H$.
If for each $h\in H$ the map 
$$\rho_h:A\longrightarrow A \qquad \mbox{ with } \qquad \rho_h(a)=h\cdot a, \forall a\in A$$
is continuous with respect to the topology induced by $\cF$, then the $H$-action extends to the localization $Q=A_{\cF}$.  In case $H$ is finite dimensional, the continuity of all maps $\rho_h$ is equivalent to the condition that any left ideal in $\cF$ contains an $H$-stable left ideal which still belongs to $\cF$.
\end{theorem}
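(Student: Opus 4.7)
My approach follows the Skryabin--Van Oystaeyen strategy from the Artinian case. Encode the left $H$-module algebra structure on $A$ as an algebra homomorphism $\rho : A \to \Hom{H}{A}$ into the convolution algebra, with $\rho(a)(h) = h \cdot a$; composing with the map $\Hom{H}{A} \to \Hom{H}{Q}$ induced by the localization $j : A \to Q$, one obtains an algebra map $\tilde \rho : A \to \Hom{H}{Q}$. The plan is to factor $\tilde \rho$ uniquely through $Q$ as an algebra map $\rho' : Q \to \Hom{H}{Q}$ by the universal property of the Gabriel localization, and then to identify $\rho'$ with the data of a left $H$-module algebra structure on $Q$ extending the one on $A$.

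The continuity hypothesis unfolds to the combinatorial statement: for every $I \in \cF$ and $h \in H$ there exists $J \in \cF$ with $h \cdot J \subseteq I$. View $\Hom{H}{Q}$ as a left $A$-module via $\tilde\rho$ (so that $a \cdot f := \tilde\rho(a) \ast f$, making $\tilde\rho$ into an $A$-linear map). The task then reduces to verifying that this $A$-module is $\cF$-torsion-free and $\cF$-injective, since under these hypotheses the universal property of $Q$ produces a unique $A$-linear extension $\rho' : Q \to \Hom{H}{Q}$, and its multiplicativity is forced by the fact that $j$ is a ring epimorphism onto $Q$. Continuity of each $\rho_h$ is exactly the input needed to check both $\cF$-torsion-freeness of $\Hom{H}{Q}$ and the extension of $A$-linear maps from elements of $\cF$ to $A$ with values in $\Hom{H}{Q}$. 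The main obstacle, in my view, is the $\cF$-injectivity verification: one must control a whole family of continuity choices (one $J$ per $h \in H$) in a coherent way, which is immediate when $H$ is finite-dimensional by choosing a single $J$ working for a basis, but in general one has to restrict to finite-dimensional subcoalgebras $C \subseteq H$ as in Skryabin--Van Oystaeyen and then pass to the limit.

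For the finite-dimensional addendum, given $I \in \cF$ set $\widetilde I := \{ a \in A : h \cdot a \in I \text{ for all } h \in H \}$. It is $H$-stable, since $h \cdot (g \cdot a) = (hg) \cdot a \in I$ for all $h,g \in H$, and it is a left ideal because the measuring identity
\[ h \cdot (ra) = \sum_{(h)} (h_1 \cdot r)(h_2 \cdot a) \in I \]
holds whenever each $h_2 \cdot a$ lies in the left ideal $I$. Moreover $\widetilde I \subseteq I$ since $1_H \cdot a = a$. Choosing a basis $\{h_1, \ldots, h_n\}$ of $H$ and applying continuity, pick $J_i \in \cF$ with $h_i \cdot J_i \subseteq I$; then $J := \bigcap_{i=1}^n J_i$ is in $\cF$ by finite-intersection closure of the filter, and $k$-linearity of the action in $h$ yields $h \cdot J \subseteq I$ for every $h \in H$, so $J \subseteq \widetilde I$. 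Hence $\widetilde I$ is a left ideal containing $J \in \cF$, and therefore $\widetilde I \in \cF$, providing the required $H$-stable refinement of $I$. The converse is immediate: if $J \in \cF$ is $H$-stable with $J \subseteq I$, then $h \cdot J \subseteq J \subseteq I$ for every $h \in H$, which is exactly the continuity of each $\rho_h$.
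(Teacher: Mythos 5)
A preliminary caveat: the paper offers no proof of this theorem at all --- it simply refers the reader to Montgomery, Selvan and Sidorov --- so your proposal has to be judged on its own terms rather than against an argument in the text. The finite-dimensional addendum you give is correct and complete: the set $\widetilde I=\{a\in A: h\cdot a\in I\ \forall h\in H\}$ is $H$-stable, is a left ideal by the measuring identity, is contained in $I$ because $1_H$ acts as the identity, and contains $\bigcap_{i}J_i\in\cF$ obtained from a basis of $H$ by continuity and $k$-linearity of the action in $h$, hence lies in $\cF$; the converse direction is immediate. That half stands.

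The main extension statement, however, is not proved. You reduce it to showing that $\Hom{H}{Q}$, viewed as a left $A$-module via convolution with $\tilde\rho$, is $\cF$-torsion-free and $\cF$-injective, and then you explicitly concede that the $\cF$-injectivity is ``the main obstacle'' and only gesture at restricting to finite-dimensional subcoalgebras and passing to a limit. That verification is where the entire content of the theorem lies, so what remains is a plan rather than a proof. Two further points would need repair even if that gap were filled. First, multiplicativity of $\rho'$ is not ``forced by $j$ being a ring epimorphism'': the canonical map $A\to A_\cF$ of a Gabriel localization is a ring epimorphism only for perfect torsion theories, not in general, so one must instead compare $\rho'(qq')$ with $\rho'(q)\ast\rho'(q')$ after multiplying by a suitable ideal in $\cF$ and appeal to torsion-freeness. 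Second, even the $\cF$-torsion-freeness of $\Hom{H}{Q}$ is not an immediate consequence of continuity: from $\sum_{(h)} j(h_1\cdot r)f(h_2)=0$ for all $r$ in some $I\in\cF$ one reads off $f(1_H)=0$ by taking $h=1_H$, but for general $h$ one has to untwist the convolution (e.g.\ via the antipode) before torsion-freeness of $Q$ can be applied. Finally, you never verify that the extended measuring is an honest $H$-module action, i.e.\ that $(hg)\cdot q=h\cdot(g\cdot q)$ holds on all of $Q$; this is exactly the ``final step'' the paper singles out even in its sketch of the analogous Skryabin--Van Oystaeyen argument, and it too requires a torsion-freeness argument on $Q$.
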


For the proof see \cite{MontgomeryInvertible,Selvan, Sidorov}. Actually Sidorov proved in \cite{Sidorov} more generally that any $\cF$-continuous local action of a Hopf algebra on $A$ can be extended to a global action on $A_{\cF}$, where $H$ \emph{acts locally} on $A$ if the action of an element $h\in H$ is defined on some element of $\cF$.

In \cite{Rumynin93}, D. Rumynin claimed that an action of a semisimple Hopf algebra always extends to the maximal left ring of quotient of a left non-singular left $H$-module algebra $A$. Recall that the maximal left ring of quotient of a left non-singular ring is the Gabriel localization with respect to the filter $\cF$ of essential left ideals. In the case of $A$ being left non-singular, $A_\cF$ is the injective hull of $A$ as a left $A$-module. However there was a gap in the proof of \cite{Rumynin93} and an (unsuccessful) attempt to fix it in \cite{LompExtending} could only conclude that the Hopf action extends to the maximal left ring of quotients of $A$ if and only if $A_\cF$ coincides with the Gabriel localization of  $A$ with respect to the filter associated to the injective hull of $A$ as $A\# H$-module.

\subsection{Martindale ring of quotient}

The disadvantage of dealing with the classical or the maximal ring of quotient is that further assumptions like the Goldie or non-singularity condition have to be made. Martindale developed a theory of embedding any semiprime ring $R$ into a certain overring such that its  center $Z(R)$ is a von Neumann regular ring. M. Cohen, already in her early paper \cite{Cohen85}, extended the Hopf action of a Hopf algebra $H$ on a semiprime left $H$-module algebra $A$ to a certain subring of Martindale's quotient algebra.

\begin{theorem}[Cohen, 1985]
Let $A$ be any semiprime left $H$-module algebra. The $H$-action extends to
$$Q_0 =\lim \{ \mathrm{Hom}({_AI}, {_AA})\: \mid I \mbox{ is an $H$-stable ideal of $A$ with zero annihilator}\}.$$
\end{theorem}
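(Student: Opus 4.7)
The plan is to verify that the $H$-stable ideals with zero annihilator form a sufficient directed system, then equip each $\mathrm{Hom}(_AI,_AA)$ with an $H$-action compatible with restriction, and finally check that the colimit $Q_0$ is an $H$-module algebra extending $A$.

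First I would confirm that $\mathcal{I}:=\{I\triangleleft A : I\text{ is }H\text{-stable and }\mathrm{Ann}_A(I)=0\}$ is nonempty (it contains $A$) and directed downwards: for $I,J\in\mathcal{I}$ the ideal $I\cap J$ is clearly $H$-stable, and since $A$ is semiprime the intersection of two ideals with zero annihilator again has zero annihilator. Thus $Q_0=\lim\{\mathrm{Hom}(_AI,_AA) : I\in\mathcal{I}\}$, with transition maps given by restriction, is a well-defined $k$-vector space carrying the usual Martindale-type ring structure. Next, for $h\in H$ and $f\in\mathrm{Hom}(_AI,_AA)$ I would set $(h\cdot f)(a):=\sum_{(h)} h_1\cdot f(S(h_2)\cdot a)$ for $a\in I$, which is well-defined by $H$-stability of $I$. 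A Sweedler computation using the module-algebra identity (\ref{modulealgebra1}), the anticomultiplicativity of the antipode, and the antipode axiom $\sum_{(h)} h_1 S(h_2)=\epsilon(h)1_H$ shows both that $h\cdot f$ is again left $A$-linear and that $(gh)\cdot f=g\cdot(h\cdot f)$. The restriction maps $\mathrm{Hom}(_AI,_AA)\to\mathrm{Hom}(_AJ,_AA)$ for $J\subseteq I$ in $\mathcal{I}$ are $H$-linear because $J$ is $H$-stable, so $Q_0$ inherits a well-defined $H$-module structure.

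To see that this action makes $Q_0$ an $H$-module algebra extending $A$, note that the unit $1_{Q_0}$ is represented by $\mathrm{id}_A\in\mathrm{Hom}(_AA,_AA)$ and satisfies $h\cdot\mathrm{id}_A=\epsilon(h)\mathrm{id}_A$ directly from the antipode axiom, while multiplicativity $h\cdot(fg)=\sum_{(h)}(h_1\cdot f)(h_2\cdot g)$ follows from a Sweedler calculation analogous to the one for $A$-linearity, evaluated on a sufficiently small refined ideal in $\mathcal{I}$. Finally, the embedding $A\hookrightarrow Q_0$, $a\mapsto\ell_a$ (left multiplication on $A$), is $H$-equivariant because
$$(h\cdot\ell_a)(b)=\sum_{(h)} h_1\cdot\bigl(a(S(h_2)\cdot b)\bigr)=\sum_{(h)} (h_1\cdot a)(h_2 S(h_3)\cdot b)=(h\cdot a)\,b=\ell_{h\cdot a}(b),$$
where the second equality expands (\ref{modulealgebra1}) in triple Sweedler notation and the third uses $\sum_{(h)} h_1\otimes h_2 S(h_3)=h\otimes 1_H$, a direct consequence of the antipode axiom applied to $\Delta^{(2)}(h)$.

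The main obstacle is the Sweedler bookkeeping underpinning the second step. Verifying $A$-linearity of $h\cdot f$ requires expanding $S(h_2)\cdot(ba)$ via the anticomultiplicativity of $S$, distributing $h_1\cdot(-)$ across the resulting product via the module-algebra axiom, and collapsing a neighbouring pair of antipode factors; the multiplicativity of the $H$-action on $Q_0$ follows by an entirely analogous but more involved calculation. Once this calculational pattern is fixed, compatibility with the directed system and with the embedding of $A$ is largely formal.
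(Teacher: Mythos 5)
The survey states this theorem without proof, merely citing Cohen's paper, so there is no in-text argument to compare against; judged on its own terms, your overall architecture --- directedness of the family of $H$-stable ideals with zero annihilator (your semiprimeness argument for $\mathrm{Ann}(I\cap J)=0$ is correct), an antipode-twisted $H$-action on each Hom-module compatible with restriction, followed by the formal checks --- is the standard and correct strategy.

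There is, however, a genuine error in the central formula, located exactly at the step you defer to as ``collapsing a neighbouring pair of antipode factors''. For a homomorphism of \emph{left} $A$-modules, $f(ba)=bf(a)$, your definition $(h\cdot f)(a)=\sum_{(h)} h_1\cdot f(S(h_2)\cdot a)$ expands to
$$(h\cdot f)(ba)=\sum_{(h)}\bigl(h_1S(h_4)\cdot b\bigr)\bigl(h_2\cdot f(S(h_3)\cdot a)\bigr),$$
because $\Delta(S(h_2))=S(h_3)\otimes S(h_2)$ reverses the legs and the factor $S(h_{\mathrm{last}})\cdot b$ emerges on the \emph{left}, where it meets $h_1$. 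The legs $h_1$ and $h_4$ are not adjacent, and $\sum h_1S(h_4)\otimes h_2\otimes h_3\neq 1\otimes h_1\otimes h_2$ for noncocommutative $H$: already $\sum h_1S(h_3)\otimes h_2\neq 1\otimes h$ for $H=k[G]^*$ with $G$ nonabelian and $h=p_g$, $g$ noncentral (one gets $\sum_{w}p_{w^{-1}}\otimes p_{wgw^{-1}}$), so semisimplicity does not rescue the identity. Relatedly, left multiplication $\ell_a$ is not left $A$-linear, so your embedding $a\mapsto\ell_a$ does not land in $\mathrm{Hom}({}_AI,{}_AA)$ at all. Your formulas are in fact the correct ones for the right-handed quotient $\lim\mathrm{Hom}(I_A,A_A)$, which is not the object in the statement. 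For the stated $Q_0$ you should write the left-module maps as right operators, embed $A$ via right multiplications $a\mapsto r_a$, and set
$$a(h\cdot f):=\sum_{(h)} h_2\cdot\bigl((S^{-1}(h_1)\cdot a)f\bigr),$$
whereupon the expansion produces the genuinely adjacent pair $\sum h_3S^{-1}(h_2)=\epsilon(h_2)1$, and left $A$-linearity, $h\cdot(fg)=\sum(h_1\cdot f)(h_2\cdot g)$ and $h\cdot r_a=r_{h\cdot a}$ all go through. (The $S^{-1}$ is harmless --- $S$ is bijective for finite-dimensional $H$ and $S^{-1}=S$ in the semisimple case --- but the order of the legs $h_1,h_2$ cannot be interchanged.)
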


J. Matczuk in \cite{Matczuk91} used the overring $Q_0$ to define the $H$-central closure of $A$ as the subalgebra generated by $A$ and $Z(Q_0)^H$. In \cite{LompCClosure} I adapted R. Wisbauer's approach from \cite{WisbauerPrime} to define the $H$-central closure of a semiprime $H$-module algebra on a certain self-injective module which had the advantage of using module theoretical results to tackle Cohen-Fishman's question. For this purpose one defines an algebra structure on $A\otimes A^{op} \otimes H$ as follows: for all $a,a',b,b'\in A$ and $h,g\in H$ set:
$$ (a\otimes b \otimes h)(a'\otimes b' \otimes g) =\sum_{(h)} a(h_1\cdot a') \otimes (h_3\cdot b')b \otimes h_2g.$$  
 It has been show in \cite{LompThesis} (see also \cite{ConnesMoscovici, Kadison, PanaiteVanOystaeyen}) that $A\otimes A^{op} \otimes H$ becomes an associative unital algebra with this multiplication, which I  denote by $A^e \bowtie H,$ where $A^e=A\otimes A^{op}$ stands for the enveloping algebra of $A$. Note that if $H$ is cocommutative, then $A^e$ becomes a left $H$-module algebra and $A^e\bowtie H = A^e \# H$ is the ordinary smash product. Moreover $A$ has a natural $A^e\bowtie H$-module structure such that its submodules are precisely its $H$-stable two-sided ideals. The following two theorems appeared  in \cite{LompThesis}  and were published in \cite{LompSmash}.
 
\begin{theorem}[Lomp, 2002]\label{selfinjective}
Let  $\widehat{A}$ be the self-injective hull of $A$ as left $A^e\bowtie H$-module. Then
\begin{enumerate}
\item $\widehat{A}$ is a left $H$-module algebra with subalgebra $A$;
\item $\widehat{A}$ is isomorphic to Matczuk's $H$-central closure, i.e. $$\widehat{A}\simeq \langle A, Z(Q_0)^H\rangle\subseteq Q_0;$$
\item $\mathrm{End}_{A^e\bowtie H}(\widehat{A}) \simeq Z(Q_0)^H$;
\item $Z(Q_0)^H$ is von Neumann regular and self-injective.
\end{enumerate}
\end{theorem}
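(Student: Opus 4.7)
The plan is to adapt the classical Martindale/Utumi ring-of-quotients construction to the category of left $A^e \bowtie H$-modules. The $A^e\bowtie H$-submodules of $A$ are precisely the $H$-stable two-sided ideals, so $A$ is semiprime as an $A^e\bowtie H$-module if and only if it is semiprime as an $H$-module algebra. Under this hypothesis, $A$ is non-singular as an $A^e\bowtie H$-module, $\widehat{A}$ inherits non-singularity, and each $\hat a \in \widehat A$ admits an $H$-stable essential ideal $I$ of $A$ with zero annihilator satisfying $I\hat a \subseteq A$ and $\hat a I \subseteq A$.

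For part (1) I would define the product on $\widehat A$ as follows. Given $\hat a, \hat b \in \widehat A$, choose $H$-stable essential ideals $I,J\subseteq A$ with $I\hat a\subseteq A$ and $\hat b J\subseteq A$. The map $I\otimes J \to A$, $x\otimes y\mapsto (x\hat a)(\hat b y)$, is $A$-bilinear and $H$-equivariant and, by injectivity of $\widehat A$ together with essentiality of $IJ$, extends uniquely to an element of $\widehat A$ which one declares to be $\hat a\hat b$. Non-singularity yields independence of the choice of $I$ and $J$, and the module-algebra identities \eqref{modulealgebra1} and \eqref{modulealgebra2} together with associativity hold after multiplying by a sufficiently small $H$-stable essential ideal on each side, hence by essentiality they hold universally in $\widehat A$.

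For part (2) I would send $\hat a\in\widehat A$ to the partial $A$-bimodule map $x\mapsto x\hat a$ defined on an $H$-stable essential ideal $I$. Since $H$-stable essential ideals are cofinal in the directed system defining $Q_0$, this yields a well-defined $H$-equivariant algebra embedding $\Phi\colon \widehat A\hookrightarrow Q_0$. Clearly $\Phi(A)=A$, and the elements $\hat a\in\widehat A$ that are centralized by all of $A$ correspond by essentiality to $H$-invariant central elements of $Q_0$, so the image is $\langle A, Z(Q_0)^H\rangle$. Part (3) falls out of the same analysis: an $A^e\bowtie H$-endomorphism $\varphi$ of $\widehat A$ is determined by $c=\varphi(1_A)$, and commutation with left and right $A$-multiplication together with $H$-equivariance force $c$ to be central and $H$-invariant in $\widehat A$, which under $\Phi$ is $Z(Q_0)^H$.

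For part (4) one invokes Wisbauer's general result from \cite{WisbauerPrime}: the endomorphism ring of the self-injective hull of a non-singular (equivalently, semiprime) module is von Neumann regular and self-injective. The main obstacle is part (1), namely constructing the product on $\widehat A$ and verifying it defines an $H$-module algebra structure: the delicate point is that $A^e\bowtie H$ couples the left and right $A$-actions via the $H$-coproduct, so the partial product $(x,y)\mapsto (x\hat a)(\hat b y)$ must respect the twisted bimodule structure simultaneously on both sides, and essentiality must be invoked carefully to push identities from $I\cdot \widehat A\cdot J$ up to all of $\widehat A$.
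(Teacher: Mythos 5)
The survey does not actually prove Theorem \ref{selfinjective}; it quotes it from \cite{LompThesis}, \cite{LompSmash} and \cite{LompCClosure}, so your proposal can only be measured against the strategy of those sources. Your overall plan is the intended one: regard $A$ as a polyform (non-singular) module over $A^e\bowtie H$, realize elements of $\widehat{A}$ by Martindale-style partial maps, and obtain part (4) from Wisbauer's theorem in \cite{WisbauerPrime} that the endomorphism ring of the self-injective hull of a polyform module is von Neumann regular and self-injective. Two points you assert do need (short) proofs rather than being reformulations: polyformness of ${}_{A^e\bowtie H}A$ follows from semiprimeness only after one checks that annihilators of $H$-stable ideals are again $H$-stable (this uses the antipode) and that a morphism $J\to A$ vanishing on an essential $H$-stable ideal $I\subseteq J$ has image annihilated by $I$, hence is zero. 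Parts (2)--(4) are fine in outline modulo this.

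The genuine gap is the construction of the product in part (1). The assignment $x\otimes y\mapsto (x\hat a)(\hat b y)$ is neither $A$-balanced nor does it descend to the ideal $IJ$, so there is nothing for the injectivity of $\widehat{A}$ to extend. Already for $\hat a=a\in A$ and $\hat b=1$ the formula reads $x\otimes y\mapsto xay$, and $\sum_i x_iy_i=0$ does not force $\sum_i x_iay_i=0$: take $x_1=u$, $y_1=vw$, $x_2=uv$, $y_2=-w$, so that the first sum vanishes while the second equals $u[a,v]w$, which is nonzero in any noncommutative semiprime $A$ for suitable $a,v$. The identity $(x\hat a)(\hat b y)=x(\hat a\hat b)y$ is correct \emph{a posteriori}, but since a general element of $\widehat{A}$ is not central, this two-variable data cannot be collapsed into a one-variable partial morphism on an essential $H$-stable ideal, which is what your extension argument requires. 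The repair is to reverse the logical order: first prove $\widehat{A}=\Lambda A$ with $\Lambda=\mathrm{End}_{A^e\bowtie H}(\widehat{A})$, show $\Lambda$ is commutative and that its elements centralize $A$ and are $H$-invariant, and define $\lambda(a)\cdot\mu(b):=\lambda\mu(ab)$, where well-definedness now does follow from the bimodule property of the $\lambda$'s and polyformness; equivalently, build the module embedding $\Phi:\widehat{A}\to Q_0$ of your part (2) first, verify its image is the subring $A\cdot Z(Q_0)^H$, and transport the ring structure of $Q_0$ back. This is the route of \cite{LompCClosure}. Note also that your two-sided denominator claim $I\hat a\subseteq A$ and $\hat aI\subseteq A$ itself presupposes the decomposition $\widehat{A}=\Lambda A$, so it cannot be invoked before that decomposition is established.
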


As an application one can prove that Cohen-Fishman's question has a positive answer for commutative module algebras.
\begin{theorem}[Lomp, 2002]
Let $H$ be any semisimple Hopf algebra  and $A$ a commutative left $H$-module algebra. If $A$ is semiprime, then $A\# H$ is also semiprime.
\end{theorem}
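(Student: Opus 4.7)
The plan is to apply Theorem~\ref{selfinjective} to construct the self-injective hull $\widehat{A}$ of $A$ as a left $A^e\bowtie H$-module, and to exploit its commutative self-injective structure, together with the von Neumann regularity of its endomorphism ring, to conclude semiprimeness of $A\# H$.

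I would first verify that $\widehat{A}$ is commutative. For any $H$-stable ideal $I$ of $A$ with zero annihilator, $\mathrm{End}_A(I)$ is commutative, since for $\varphi,\psi\in\mathrm{End}_A(I)$ and $a,b\in I$ a direct $A$-bilinearity calculation using $ab=ba$ gives $a[\varphi,\psi](b)=0$, so $[\varphi,\psi]$ annihilates $I$ and must vanish. Taking the inverse limit yields that $Q_0$, and hence $\widehat{A}\subseteq Q_0$, is commutative. In particular, by Theorem~\ref{selfinjective}(4), $\widehat{A}^H=Z(Q_0)^H$ is a commutative von Neumann regular self-injective ring.

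The commutativity of $\widehat{A}$ now allows to identify the $A^e\bowtie H$-module structure on $\widehat{A}$ with an $A\# H$-module structure, since for commutative $A$ an $A$-bimodule is the same as an $A$-module. Using the freeness of $A^e\bowtie H$ as a left $A\# H$-module (with $A\# H$ embedded by $a\# h\mapsto a\otimes 1\otimes h$), injectivity of $\widehat{A}$ as an $A^e\bowtie H$-module transfers to injectivity as an $A\# H$-module, with $\mathrm{End}_{A\# H}(\widehat{A})=\widehat{A}^H$ remaining von Neumann regular. Reducing modulo the largest Hopf ideal $J$ of $H$ annihilating $A$---which yields a surjection $A\# H\twoheadrightarrow A\#(H/J)$ with kernel $A\otimes J$---one may assume without loss of generality that $H$ acts inner faithfully on $A$ and that $\widehat{A}$ is a faithful $A\# H$-module. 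The module-theoretic principle adapted from Wisbauer in~\cite{LompCClosure}---that a ring admitting a faithful injective module with von Neumann regular endomorphism ring is semiprime---then delivers semiprimeness of $A\# H$.

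The main obstacle is ensuring the identification $\mathrm{End}_{A\# H}(\widehat{A})=\widehat{A}^H$: it hinges entirely on $A$ being commutative, so that the right $A$-action on $\widehat{A}$ is subsumed by the left action. In the non-commutative setting $\widehat{A}^H$ only controls the $A$-bilinear $H$-equivariant endomorphisms, while $A\# H$-endomorphisms of $\widehat{A}$ form a potentially larger ring whose structure is not governed by Theorem~\ref{selfinjective}(4)---precisely the barrier that keeps the Cohen--Fishman question open in the non-commutative case.
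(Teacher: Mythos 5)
Your reduction to a one-sided module over $A\# H$ breaks down at the final step. The principle you invoke --- that a ring admitting a faithful injective left module with von Neumann regular endomorphism ring must be semiprime --- is false. Take $R$ to be the ring of upper triangular $2\times 2$ matrices over $k$ with matrix units $e_{ij}$: the left ideal $Re_{22}$ is a faithful, injective left $R$-module with $\mathrm{End}_R(Re_{22})\simeq e_{22}Re_{22}\simeq k$ von Neumann regular, yet $ke_{12}$ is a non-zero nilpotent two-sided ideal of $R$. Semiprimeness is a two-sided condition, and this is exactly why Wisbauer's machinery and Theorem~\ref{selfinjective} work with $A$ as a module over $A^e\bowtie H$, whose submodules are the $H$-stable \emph{two-sided} ideals, rather than over $A\# H$. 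Passing from $A^e\bowtie H$ to $A\# H$, as you propose, discards precisely the information needed, and the ``inner faithful'' reduction does not repair this: faithfulness of $\widehat{A}$ as a one-sided module is neither automatic nor the relevant issue.

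The paper's route, after the (correct) observation that $\widehat{A}$ is commutative and that $\widehat{A}^H=Z(Q_0)^H$ is von Neumann regular by Theorem~\ref{selfinjective}(4), relies on two ingredients absent from your argument. First, Zhu's integrality theorem \cite{Zhu96}: $\widehat{A}^H\subseteq\widehat{A}$ is an integral extension, and a commutative reduced ring integral over a von Neumann regular (zero-dimensional reduced) ring is itself von Neumann regular; this upgrades regularity from the invariants to $\widehat{A}$ itself and is the heart of the proof. Second, since $H$ is semisimple, $\widehat{A}\subseteq\widehat{A}\# H$ is a separable extension, so $\widehat{A}\# H$ is von Neumann regular by Corollary~\ref{semisimple} and in particular semiprime; as $\widehat{A}\# H$ is a central extension of $A\# H$, the latter is semiprime. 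Without Zhu's theorem and the separability/central-extension steps, your argument does not reach the conclusion.
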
 
\begin{proof}
The idea of the proof as given in \cite{LompSmash} is as follows: $A$ being commutative implies that $\widehat{A}$  is commutative.
Zhu's result \cite{Zhu96} says that $\widehat{A}^H \subseteq \widehat{A}$ is an integral extension. Since $\widehat{A}^H$ is von Neumann regular by \ref{selfinjective}(4), also $\widehat{A}$ is von Neumann regular  (which in the commutative reduced case means that they are $0$-dimensional rings). As $\widehat{A}\subseteq \widehat{A}\# H$ is a separable extension, also $\widehat{A}\# H$ is von Neumann regular by \ref{semisimple}. As $\widehat{A}\# H$ is a central extension of $A\# H$, the later one is semiprime.
\end{proof}

Linchenko and Montgomery generalized the last theorem in \cite{LinchenkoMontgomery}  to semiprime $H$-module algebras that satisfy a polynomial identity.

\begin{theorem}[Linchenko-Montgomery, 2007]
Let $H$ be any semisimple Hopf algebra  and $A$ a left $H$-module algebra that satisfies a polynomial identity. If $A$ is semiprime, then $A\# H$ is also semiprime.
\end{theorem}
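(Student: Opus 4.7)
The strategy is to mimic the proof of the commutative case given just above, replacing Zhu's integrality result with a polynomial identity analogue. Let $\widehat{A}$ denote the self-injective hull of $A$ as a left $A^e \bowtie H$-module, so that by Theorem \ref{selfinjective} it is again an $H$-module algebra containing $A$, it embeds into Matczuk's overring $Q_0$, and $Z(Q_0)^H$ is von Neumann regular. The goal is to show that $\widehat{A}\# H$ is von Neumann regular; then, because $Z(Q_0)^H$ commutes elementwise with $A\# H$ inside $\widehat{A}\# H$ and generates $\widehat{A}$ together with $A$, any nilpotent ideal of $A\# H$ would generate a nilpotent ideal of $\widehat{A}\# H$, so semiprimeness descends to $A\# H$ exactly as in the commutative case.

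First I would verify that $\widehat{A}$ satisfies the same polynomial identity as $A$. Since $\widehat{A}\subseteq Q_0$ is embedded in a Martindale-type symmetric ring of quotients of the semiprime PI algebra $A$, this follows from the classical fact that symmetric Martindale quotients of semiprime PI rings are PI of the same degree.

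Next I would show that $\widehat{A}$ is von Neumann regular. Using the normalized integral $t\in H$ with $\epsilon(t)=1$ produced by Sweedler's Theorem \ref{SweelderTheorem}, one has a Reynolds-type operator $t\cdot(-):\widehat{A}\to \widehat{A}^H$ which, combined with the polynomial identity satisfied by $\widehat{A}$, should yield that $\widehat{A}$ is integral over $\widehat{A}^H$ in a PI sense: every element satisfies a monic polynomial whose coefficients are obtained from traces lying in $\widehat{A}^H$. Combined with the fact that $\widehat{A}^H$ is von Neumann regular by Theorem \ref{selfinjective}(4), this PI integrality then transfers von Neumann regularity from the invariants to all of $\widehat{A}$, in the same spirit as the commutative case where integrality over a $0$-dimensional ring forces $0$-dimensionality of the top ring.

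Finally, since $H$ is semisimple, $\widehat{A}\subseteq \widehat{A}\# H$ is a separable extension, and so part (1) of Corollary \ref{semisimple} yields that $\widehat{A}\# H$ is von Neumann regular, in particular semiprime, and the descent described in the first paragraph then completes the proof. The main obstacle is the second step: producing a workable notion of integrality of $\widehat{A}$ over $\widehat{A}^H$ in the PI setting. Zhu's theorem handled the commutative case cleanly, but the noncommutative version requires delicately blending Razmyslov--Procesi type trace identities with the Reynolds operator coming from the integral in $H$, and this is the technical heart of Linchenko and Montgomery's contribution.
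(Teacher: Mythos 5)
Your proposal has a genuine gap at exactly the point you flag as ``the technical heart,'' and unfortunately that point is not a technicality one can defer: it is the whole proof. Two concrete problems. First, you invoke Theorem \ref{selfinjective}(4) to say that $\widehat{A}^H$ is von Neumann regular, but that theorem asserts regularity of $Z(Q_0)^H$, which is identified with $\mathrm{End}_{A^e\bowtie H}(\widehat{A})$; in the noncommutative setting $Z(Q_0)^H$ is in general a proper (central) subring of $\widehat{A}^H$, so the base of your intended integrality argument is not known to be regular. Second, even granting a Reynolds operator $t\cdot(-)$ and Razmyslov--Procesi trace identities, there is no established noncommutative analogue of Zhu's integrality theorem, nor of the commutative fact that an integral extension of a $0$-dimensional reduced ring is $0$-dimensional; ``PI integrality over $\widehat{A}^H$ implies $\widehat{A}$ von Neumann regular'' is precisely the assertion you would need to prove, and you give no mechanism for it. The descent step at the end is fine ($Z(Q_0)^H\#1$ is central in $\widehat{A}\#H$, so semiprimeness does pass down), but the ascent to regularity of $\widehat{A}$ is unsupported.

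For comparison, Linchenko and Montgomery's actual argument avoids the central closure entirely and works with Jacobson radicals. One first reduces to $\mathrm{Jac}(A)=0$ by passing to $A[t]$ with trivial $H$-action on $t$ and applying Amitsur's theorem. Blattner--Montgomery duality gives $A\#H\#H^*\simeq M_n(A)$, hence $\mathrm{Jac}(A\#H\#H^*)=0$; since $H^*$ is semisimple, $A\#H$ is projective as an $A\#H\#H^*$-module, so $\mathrm{Jac}(A\#H)$ contains no nonzero $H^*$-stable left ideal. The PI hypothesis then enters through Kaplansky's theorem (primitive PI rings are finite dimensional over their centres) together with Linchenko's result that the Jacobson radical of a finite dimensional module algebra is stable under the Hopf action, which forces $\mathrm{Jac}(A\#H)$ to be $H^*$-stable and therefore zero. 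I would suggest reworking your argument along these lines, or else supplying an actual proof of the regularity transfer you postulate.
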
 
\begin{proof}
Linchenko and Montgomery's original statement is more general than the statement that I give here (due to my general characteristic zero assumption). The basic ideas of their proof can be summarized as follows: The polynomial ring $A[t]$ is also a left $H$-module algebra, where $H$ acts trivially on $t$. Using Amitsur's theorem one concludes that $\mathrm{Jac}(A[t]) = 0$ and hence one can for simplicity assume $\mathrm{Jac}(A)=0$. Hence $\mathrm{Jac}(A\# H \# H^*)=0$ holds, using Blattner-Montgomery duality. Since $A\# H$ is a projective $A\# H \# H^*$-module (using the assumption of $H^*$ being semisimple), also $\mathrm{Rad}(A\# H)=0$ as $A\# H \# H^*$-module. This implies in particular that $\mathrm{Jac}(A\# H)$ does not contain any $H^*$-stable left ideal of $A\# H$. On the other hand, in  \cite{Linchenko} Linchenko proved that the Jacobson radical of a finite dimensional module algebra is stable under the Hopf action and the fact that primitive factors of PI-algebras are finite dimensional over their centre allows to show that $\mathrm{Jac}(A\# H)$ is $H^*$-stable and hence zero.
\end{proof} 

Linchenko and Montgomery's proof works also for finite dimensional involutive weak Hopf algebras (see  \cite{BorgesLomp}). 

\subsection{Large subalgebra of invariants}
A necessary condition for a positive answer to Cohen-Fishman's question is that strongly semisimple Hopf algebras $H$ have the property that semiprime left $H$-module algebras $A$ have a large subalgebra of invariants $A^H$ in the sense that any non-zero $H$-stable left ideal intersects $A^H$ non-trivially. To see this let $t$ be an integral of $H$ with $\epsilon(t)=1$. If $A$ is semiprime and $H$ strongly semisimple, then $A\# H$ is semiprime. Since $(I\# t)^2 \neq 0$ for any $H$-stable left ideal $I$ of $A$, one concludes that $0\neq (t\cdot I)  \subseteq I\cap A^H$.  It is not known whether this holds for any semiprime left $H$-module algebra $A$ over a semisimple Hopf algebra. For group actions this fact was first proved by Bergman and Isaacs in \cite{BergmanIsaacs}. For finite dimensional Hopf algebras with cocommutative coradical a similar statement had been proved by Beidar and Torrecillas in \cite{BeidarTorrecillas}. The following theorem, proved by Bahturin and Linchenko in \cite{BathurinLinchenko}, involves not necessarily unital module algebras.

\begin{theorem}[Bathurin-Linchenko, 1998]\label{Bathurin} The following statements are equivalent for a finite dimensional Hopf algebra $H$.
\begin{enumerate}
\item[(a)] Any, not necessary unital, left $H$-module algebra $A$ is nilpotent if $A^H$ is nilpotent.
\item[(b)] Any, not necessary unital, left $H$-module algebra $A$ satisfies a polynomial identity if $A^H$ does.
\item[(c)] $\mathrm{dim}(T/\langle T^H\rangle)<\infty$ for $T=T(H)/k$, where $T(H)$ is the tensor algebra of $H$.
\end{enumerate}
\end{theorem}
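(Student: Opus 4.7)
My plan rests on identifying $T=T(H)/k=\bigoplus_{n\geq 1}H^{\otimes n}$ as the free non-unital $H$-module algebra on a single generator. I would first equip $T$ with the concatenation product (making it a positively graded associative non-unital algebra) and the $H$-action where $H$ acts on the degree-one summand by left multiplication and diagonally on higher tensor powers via iterated comultiplication. The universal property I will invoke is that for any non-unital left $H$-module algebra $A$ and any $a\in A$, the assignment $h_1\otimes\cdots\otimes h_n\mapsto (h_1\cdot a)(h_2\cdot a)\cdots(h_n\cdot a)$ defines the unique $H$-algebra homomorphism $\phi_a\colon T\to A$ with $\phi_a(1_H)=a$. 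Since $\phi_a$ is $H$-equivariant, $\phi_a(T^H)\subseteq A^H$, whence $\phi_a(\langle T^H\rangle)\subseteq\langle A^H\rangle$, where $\langle A^H\rangle$ denotes the two-sided ideal of $A$ generated by the invariants. Because $T^H$ is homogeneous, $\langle T^H\rangle$ is a graded ideal, so $T/\langle T^H\rangle$ is a positively graded non-unital algebra with finite-dimensional homogeneous components; for such an algebra, finite total dimension is equivalent to nilpotency. This reformulates (c) as: there exists $M$ with $T^{\geq M}\subseteq\langle T^H\rangle$.

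For the direction (a) $\Rightarrow$ (c) I would argue by contradiction. Assuming $T/\langle T^H\rangle$ has infinite dimension, I set $A:=T/\langle T^H\rangle$. Using a left integral $e\in H$ with $\epsilon(e)=1$ (available since by Larson-Radford the relevant finite-dimensional Hopf algebras in characteristic zero are semisimple), the averaging $e\cdot$ is a projection from $T$ onto $T^H\subseteq\langle T^H\rangle$, so $A^H=0$, trivially nilpotent. Yet $A$ is infinite-dimensional, positively graded with finite-dimensional homogeneous components, so it cannot be nilpotent, contradicting (a).

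For the main direction (c) $\Rightarrow$ (a), I would assume $(A^H)^m=0$ and fix $M$ with $T^{\geq M}\subseteq\langle T^H\rangle$. Applying $\phi_a$ to $a=\lambda_1 a_1+\cdots+\lambda_n a_n$, the containment $\phi_a(T^{\geq M})\subseteq\langle A^H\rangle$ yields, upon expansion and extraction of multilinear coefficients in the $\lambda_i$'s (valid in characteristic zero), the identity $\sum_{\sigma\in S_M}a_{\sigma(1)}\cdots a_{\sigma(M)}\in\langle A^H\rangle$ for all $a_1,\dots,a_M\in A$, and more generally analogous $H$-twisted symmetric relations from $a=\sum_i h_i\cdot a_i$. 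A Nagata-Higman style induction would then upgrade these symmetrized relations to $A^N\subseteq\langle A^H\rangle$ for some $N$. Combining this with $(A^H)^m=0$ and a careful decomposition of $\langle A^H\rangle$ into sandwiches $u\,x\,v$ with $x\in A^H$, one concludes that $\langle A^H\rangle$, and hence $A$, is nilpotent.

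The equivalence of (b) with (a) and (c) proceeds along analogous lines, with \emph{nilpotent} replaced by \emph{satisfies a polynomial identity}: a PI of $A^H$ is promoted to a PI modulo $\langle A^H\rangle$ for $A$ via the same $\phi_a$ and multilinearization machinery, then a Nagata-Higman descent produces a genuine PI for $A$. I expect the main obstacle to be the multilinearization/Nagata-Higman step in (c) $\Rightarrow$ (a): bridging the gap between ``cyclic $H$-subalgebras are nilpotent modulo $\langle A^H\rangle$'' and ``$A$ is nilpotent'' requires delicate polynomial-identity arguments, and controlling powers of $\langle A^H\rangle$ in terms of powers of $A^H$ is non-trivial since, a priori, the ideal generated by a nilpotent subalgebra need not be nilpotent. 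Condition (c) is precisely the combinatorial input that makes this propagation work.
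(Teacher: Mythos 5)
First, a point of comparison: the survey does not actually prove this theorem --- it is stated as a quoted result with a citation to Bahturin--Linchenko \cite{BathurinLinchenko} --- so your proposal can only be judged on its own merits and against the original argument. Your architecture is the right one: $T=T(H)/k=\bigoplus_{n\geq 1}H^{\otimes n}$ with the diagonal action is indeed the relatively free non-unital $H$-module algebra on one generator, condition (c) does reformulate as $T^{\geq M}\subseteq\langle T^H\rangle$ by gradedness, and the specialization maps $\phi_a$ carry $\langle T^H\rangle$ into $\langle A^H\rangle$. But there are two genuine gaps. The first is your appeal to Larson--Radford in (a)$\Rightarrow$(c): the theorem is asserted for an \emph{arbitrary} finite-dimensional Hopf algebra, and Larson--Radford characterizes semisimplicity (it is equivalent to $S^2=id$); it does not grant it --- Taft algebras are finite-dimensional, non-semisimple, and live in characteristic zero. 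Your claim that $A^H=0$ for $A=T/\langle T^H\rangle$ needs the invariants functor to be exact (equivalently, a normalized integral), i.e.\ it needs semisimplicity; without it the quotient can have invariants not coming from $T^H$, and you have no control over whether they form a nilpotent subalgebra. To repair this you would have to first prove that condition (a) forces $H$ to be semisimple (a separate argument, which Bahturin and Linchenko note), rather than assert that all relevant $H$ are semisimple.

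The second and more serious gap is the one you flag yourself: in (c)$\Rightarrow$(a) you obtain $A^N\subseteq\langle A^H\rangle$ via multilinearization and Nagata--Higman, but then the proof stalls exactly where the content of the theorem lies. As you concede, $(A^H)^m=0$ does not make the ideal $\langle A^H\rangle$ nilpotent, and ``a careful decomposition into sandwiches $uxv$'' is not an argument --- a product of $k$ such sandwiches is $u_0x_1u_1x_2\cdots x_ku_k$ and there is no way to bring the invariant factors together. What is actually required is a stronger combinatorial lemma proved inside $T$ itself: for suitable $N=N(M,m)$, the homogeneous component $T_N$ lies in the ideal generated by \emph{products of $m$ elements} of $T^H$, established by induction using the $H$-equivariant substitution endomorphisms of the free object; specializing along $\phi_a$ then gives $A^N\subseteq\langle (A^H)^m\rangle=0$ directly. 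Without that lemma the implication does not close. Finally, dispatching (b) ``along analogous lines'' hides two non-formal steps: for (b)$\Rightarrow$(c) you must show that an infinite-dimensional $T/\langle T^H\rangle$ cannot satisfy a polynomial identity (infinite dimension alone does not preclude PI, so the contradiction you used for (a) is unavailable), and for (c)$\Rightarrow$(b) you must lift an identity of $A^H$ across the ideal $\langle A^H\rangle$, which Nagata--Higman does not do. These are the places where the proof still has to be written.
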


Bathurin and Linchenko also mentioned that any Hopf algebra that satisfies one of the conditions of theorem \ref{Bathurin} is semisimple, but were not able of proving the converse. However every Hopf algebra $H$ of theorem \ref{Bathurin} has the property that any semiprime left $H$-module algebra $A$ has a  large subalgebra of invariants$A^H$ because any $H$-stable left ideal $I$ of $A$ can be considered a non-unital left $H$-module algebra and hence if $I^H=I\cap A^H=\{0\}$ then $I$ would be nilpotent.

As a weak form of Cohen-Fishman's question one can formulate the following
 
\begin{question}
Let $H$ be a  semisimple Hopf algebra and $A$ a semiprime left $H$-module algebra. Does $A^H$ intersect any non-zero $H$-stable left ideal of $A$ non-trivially ?
\end{question}

\subsection{Prime and Simple smash products}
I conclude this section with short note on the characterization of prime and simple smash products as given by 
Osterburg et al. in \cite{OsterburgPassmanQuinn} in terms of the so-called \emph{Connes spectrum}. In the whole section we will assume that $k$ is an algebraically closed field of characteristic $0$.
Denote by $l(X)$ (resp. $r(X)$) the left (resp. right) annihilator of a subset $X$ in a ring $A$. A left $H$-module algebra $A$ is called $H$-prime if $l(I)=r(I)=\{0\}$ for all non-zero $H$-stable ideals $I$ of $A$, while $A$ is called $H$-simple if $\{0\}$ and $A$ are the only $H$-stable ideals of $A$. A \emph{hereditary subalgebra} of $A$ is a subalgebra $B$ of $A$ such that there exist an $H$-stable left ideal $L$ and an $H$-stable right ideal $R$ of $A$ such that $B=RL$. Let $\mathcal{H}(A,H)$ be the set of hereditary subalgebras $B$ of $A$ with $l(B)\cap B = r(B)\cap B = \{0\}$. From  \cite[Theorem 4.8 and 4.9]{OsterburgPassmanQuinn} one has 
\begin{theorem}[Osterburg-Passman-Quinn, 1992] Let $H$ be a  strongly semisimple Hopf algebra $H$, $A$ a left $H$-module algebra and set $e=1\#t$, where $t$ a left integral of $H$ with $\epsilon(t)=1$.
\begin{enumerate}
\item $A\# H$ is prime if and only if $A$ is $H$-prime and $BeB$ has zero left and right annihilator in $B\# H$ for all $B\in \mathcal{H}(A,H)$.
\item $A\# H$ is simple if and only if $A$ is $H$-simple and $BeB=B\# H$ for all $B\in \mathcal{H}(A,H)$.
\end{enumerate}
\end{theorem}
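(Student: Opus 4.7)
The approach is to exploit the idempotent structure provided by $e = 1\#t$. Since $t$ is a left integral with $\epsilon(t) = 1$, one has $t^2 = \epsilon(t)t = t$, so $e^2 = e$; a direct computation in the smash product together with the integral identity $ht = \epsilon(h)t$ shows that $e(A\#H)e = (t\cdot A)\, e \cong A^H$ as $k$-algebras. This identifies $e$ as the natural bridge between the invariant subalgebra and the smash product, so primeness and simplicity of $A\#H$ should be detectable by testing how two-sided ideals interact with $e$ and with translates of $e$ by elements of $A$; this is essentially the algebraic content of the Connes spectrum.

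For (1), the forward direction is the easier half. If $A\#H$ is prime and $I,J$ are non-zero $H$-stable ideals of $A$ with $IJ=0$, then $I\#H$ and $J\#H$ are non-zero two-sided ideals of $A\#H$ whose product lies in $IJ\#H = 0$, a contradiction; a parallel argument handles $l(I)=0=r(I)$, so $A$ must be $H$-prime. For a hereditary subalgebra $B=RL \in \mathcal{H}(A,H)$, if $x \in B\#H$ annihilated $BeB$ on one side, then the containments $R\#H,\, L\#H \subseteq A\#H$ would let us inflate $x$ to witnesses of non-primeness of $A\#H$. For the converse, I would argue contrapositively: given non-zero ideals $I,J$ of $A\#H$ with $IJ=0$, one extracts an $H$-stable right ideal $R$ from $I$ and an $H$-stable left ideal $L$ from $J$ by contracting suitably to $A$, and sets $B:=RL$. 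The $H$-primeness of $A$ forces $l(B)\cap B = r(B)\cap B = 0$, so $B \in \mathcal{H}(A,H)$. The hypothesis $IJ=0$ then translates into a non-zero annihilator for $BeB$ inside $B\#H$, contradicting the corner assumption. Statement (2) is handled by the same plan, with ``zero annihilator of $BeB$'' replaced by ``$BeB = B\#H$'' and ``$H$-prime'' by ``$H$-simple''; there the condition $BeB = B\#H$ precisely witnesses that no $H$-stable ideal between $0$ and $A$ can survive the passage to $A\#H$.

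The main obstacle, I expect, is the construction of the hereditary subalgebra $B$ from a pair of annihilating ideals of $A\#H$ and the verification of the annihilator-freeness conditions $l(B)\cap B = r(B)\cap B = 0$, which is precisely where $H$-primeness of $A$ is used. A secondary subtlety is tracking how one-sided ideals and annihilators transform under the coproduct $\Delta$ and the antipode $S$ when passing between $A$ and $A\#H$; the fact that $S^2 = \mathrm{id}$, from the Larson--Radford theorem, makes these manipulations manageable. The Connes-spectrum technology, originally developed in operator algebras for group crossed products, is what pinpoints which hereditary corners of $A$ detect failures of primeness or simplicity, and transplanting it into the algebraic setting with a general strongly semisimple Hopf algebra $H$ requires replacing conjugation by group elements with bookkeeping involving $\Delta$ and $S$.
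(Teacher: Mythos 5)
First, a caveat about the comparison you are asking for: the paper does not prove this theorem at all --- it is quoted from \cite{OsterburgPassmanQuinn} (their Theorems 4.8 and 4.9) without argument, so there is no in-paper proof to measure yours against. Judged on its own terms, your text is a reasonable high-level plan that matches the general shape of the original: reduce everything to hereditary corners $B=RL$ and test two-sided ideals against the idempotent $e=1\#t$. Your preliminary computations are correct: $e^2=e$ follows from $tt=\epsilon(t)t$, and $e(a\#h)e=\epsilon(h)(t\cdot a)e$ gives $e(A\#H)e=(t\cdot A)e=A^He\cong A^H$, using that $t$ is a two-sided integral because $H$ is semisimple. The forward implications (primeness of $A\#H$ forces $H$-primeness of $A$) are also essentially complete as you state them.

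Nevertheless this is an outline, not a proof, and it has two genuine gaps. The first is the one you yourself flag: in the converse direction, the passage from a pair of non-zero ideals $I,J$ of $A\#H$ with $IJ=0$ to a hereditary subalgebra $B=RL\in\mathcal{H}(A,H)$ for which $BeB$ acquires a non-zero one-sided annihilator in $B\#H$ is described only as ``extracts \dots by contracting suitably to $A$''; that construction, together with the verification that $l(B)\cap B=r(B)\cap B=\{0\}$, is where essentially all of the work in \cite{OsterburgPassmanQuinn} lives, so deferring it means the converse is unproved. The second gap is more telling: nowhere in your sketch is the hypothesis that $H$ is \emph{strongly} semisimple used --- only ordinary semisimplicity (to get the integral $t$). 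That hypothesis is indispensable: it is what guarantees that the smash products $B\#H$ of the relevant $H$-prime and hereditary subalgebras are semiprime, and semiprimeness is the lever that lets the standard idempotent-corner arguments convert annihilator-freeness of $BeB$ into primeness of $B\#H$ and hence of $A\#H$. A proof in which the strongest hypothesis of the theorem plays no role should make you suspicious; you should locate explicitly the step at which semiprimeness of these smash products is needed and record that this is where strong semisimplicity is consumed.
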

Osterburg et al. also expressed their result in terms of the so-called Connes spectrum whose definition I shall shortly present here. Since $H$ is semisimple and $k$ algebraically closed, $H$ decomposes into a direct product of full matrix rings over $k$. Let $\mathrm{Irr}(H)$ denote the set of irreducible representations $\pi: H \rightarrow M_{d_{\pi}}(k)$ of dimension $d_{\pi}$. For any $B\in \mathcal{H}(A,H)$ and $\pi \in \mathrm{Irr}(H)$ one defines
\begin{eqnarray*}
B^m_\pi &=& \left\{ X \in M_{d_\pi}(B) \mid \epsilon(h)X = \sum_{(h)} \pi(h_3)(h_1\cdot X)\pi(S^{-1}(h_2),\: \forall h\in H\right\}.\\
B^l_\pi &=& \left\{ X \in M_{d_\pi}(B) \mid \epsilon(h)X = \sum_{(h)} \pi(h_2)(h_1\cdot X), \: \forall h\in H\right\}.\\
B^r_\pi &=& \left\{ X \in M_{d_\pi}(B) \mid \epsilon(h)X = \sum_{(h)} (h_1\cdot X)\pi(S^{-1}(h_2),\: \forall h\in H\right\}.
\end{eqnarray*}
The \emph{Connes spectrum} is then defined as a subset of $\mathrm{Irr}(H)$ by
$$\mathrm{CS}(A,H) = \{\pi \in \mathrm{Irr}(H) \mid l(B^l_\pi B^r_\pi)\cap B^m_\pi =  r(B^l_\pi B^r_\pi)\cap B^m_\pi = \{0\}, \: \forall B\in \mathcal{H}(A,H)\}.$$
The \emph{strong Connes spectrum} is defined as 
$$\mathrm{CS}^*(A,H) = \{\pi \in \mathrm{Irr}(H) \mid B^l_\pi B^r_\pi = B^m_\pi, \: \forall B\in \mathcal{H}(A,H)\}.$$
The main result of \cite{OsterburgPassmanQuinn} is that in case $H$ is strongly semisimple, $A\#H $ is $H$-prime if and only if $A$ is $H$-prime and $\mathrm{Irr}(H)=\mathrm{CS}(A,H)$, while
$A\#H $ is $H$-simple if and only if $A$ is $H$-simple and $\mathrm{Irr}(H)=\mathrm{CS}^*(A,H)$.

\section{Conditions on $H$}
Instead of assuming further conditions on $A$ one might intend to classify semisimple Hopf algebras $H$ over $k$. Zhu proved in 1994 that  Hopf algebras of prime dimensions are group algebras (see \cite{Zhu94}) and Etingof and Gelaki proved in  \cite{EtingofGelaki} that  Hopf algebras whose dimension is a product of two prime numbers are trivial. 

\subsection{Semisolvable Hopf algebras}
Recall that a Hopf subalgebra $U$ of $H$ is called \emph{normal} if it is stable under the adjoint action, i.e. 
$$\forall h\in H: \mathrm{ad}_h(U) =\sum_{(h)} h_1US(h_2) \subseteq U.$$
If $H$ does not contain proper normal Hopf subalgebras, then it is called a \emph{simple Hopf algebra}. If $U$ is normal in $H$, then $\overline{H}=H/U^+$ becomes a Hopf algebra  with $U^+=U\cap \mathrm{Ker}(\epsilon)$.  Moreover $H$ can be recovered from $U$ and $\overline{H}$ as a \emph{crossed product}.
In \cite{MontgomeryWhitherspoon} Montgomery and Witherspoon defined a Hopf algebra $H$ to be  \emph{semisolvable} if it has a normal series
$$ k=H_0\trianglelefteq H_1 \trianglelefteq \cdots H_{m-1}\trianglelefteq H_m=H$$
with $H_{i-1}$ normal in $H_i$ such that $H_i/H_{i-1}^+$ is either commutative or cocommutative. In particular if $H$ is semisimple, then all these subquotients $H_i/H_{i-1}^+$ are trivial.
Masuoka showed in \cite{Masuoka} that every Hopf algebra of dimension $p^n$, with $p$ a prime number has a central group like element from which it follows that such Hopf algebras are semisolvable.

\begin{example}
Let $H_8=\CC[C_2\times C_2][z; \sigma]/\langle z^2 - \frac{1}{2}(1+x+y-xy)\rangle$ be the eight dimensional semisimple Hopf algebra from above. Then
$$U=\CC[C_2\times C_2] \trianglelefteq H_8 \qquad \mbox{ and } \qquad 
H_8/U^+ \simeq \CC[C_2].$$
is a normal series for $H_8$, i.e. $H_8$ is semisolvable.
\end{example}

The importance of the semisolvable condition for this paper is the following theorem due to Montgomery and Schneider  from \cite[Corollary 8.16]{MontgomerySchneider} which says that Cohen-Fishman's question has a positive answer for semisimple semisolvable Hopf algebras.

\begin{theorem}[Montgomery-Schneider, 1999]
Any semisimple semisolvable Hopf algebra is strongly semisimple.
\end{theorem}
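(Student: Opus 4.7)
The plan is to induct on the length $m$ of a normal series
\[k = H_0 \trianglelefteq H_1 \trianglelefteq \cdots \trianglelefteq H_m = H\]
exhibiting the semisolvability of $H$. The base case $m=0$ is trivial since $A\#k \simeq A$. For the inductive step, set $U:=H_{m-1}$ and $\overline{H}:=H/U^+H$. Because $H$ is semisimple, so is each subquotient $H_i/H_{i-1}^+$, and each is either commutative or cocommutative; by the earlier corollary every such subquotient is therefore trivial. In particular $\overline{H}$ is trivial. Moreover $U$ inherits a normal series of length $m-1$ with the same trivial subquotients, and is semisimple (as a Hopf subalgebra of the semisimple $H$, by Nichols-Zoeller). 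By the inductive hypothesis $U$ is strongly semisimple, and since $A$ is a semiprime $U$-module algebra by restriction, $B:=A\#U$ is semiprime.

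The next step is to realise $A\#H$ as a crossed product of $B$ by $\overline{H}$. Over a field of characteristic zero, any extension of finite dimensional semisimple Hopf algebras
\[k \to U \to H \to \overline{H} \to k\]
is cleft (Schneider), so there is a convolution-invertible $2$-cocycle $\sigma:\overline{H}\otimes \overline{H}\to U$ together with a weak action of $\overline{H}$ on $U$ producing an algebra isomorphism $H \simeq U\#_\sigma \overline{H}$. A direct associativity check then transports this to an algebra isomorphism
\[A\#H \;\simeq\; B \,\#_\tau\, \overline{H}\]
for an induced action and cocycle $\tau$ of $\overline{H}$ on $B$.

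Since $\overline{H}$ is trivial, this presents $A\#H$ either as a crossed product $B *_\tau G$ when $\overline{H} \simeq k[G]$, or as the crossed product associated to a $G$-grading when $\overline{H} \simeq k[G]^*$. In either case semiprimeness of $B$ automatically implies the $G$-semiprimeness or $G$-graded semiprimeness condition (any $G$-stable or $G$-graded ideal of $B$ is in particular an ideal of $B$), so the crossed-product versions of Theorem \ref{FisherMontgomeryTheorem} and of Theorem \ref{CohenMontgomeryTheorem} apply, yielding that $A\#H$ is semiprime. This completes the induction.

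The principal obstacle is the middle step: establishing the crossed-product isomorphism $A\#H \simeq B \#_\tau \overline{H}$ rigorously, checking well-definedness of the induced weak action and cocycle on $B = A\#U$, and verifying that the Fisher-Montgomery-Lorenz-Passman and Cohen-Montgomery semiprimeness criteria do extend from smash to twisted crossed products. The first point relies critically on cleftness of semisimple Hopf algebra extensions in characteristic zero; the second follows for group crossed products since $B*_\tau G$ is still a finite normalising extension of $B$, and dually for graded crossed products via the Morita-theoretic framework underlying the Cohen-Montgomery theorem.
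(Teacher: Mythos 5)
The survey does not actually prove this theorem; it only cites \cite[Corollary 8.16]{MontgomerySchneider} and remarks that Montgomery--Schneider work with a (presumably stronger) notion of strong semisimplicity established through the Krull relations for $H$-Galois extensions. Measured against that, your outline is essentially the architecture underlying the cited proof: induct along the normal series, peel off the top normal Hopf subalgebra $U=H_{m-1}$, use Schneider's normal basis/transitivity theorems to write $A\#H\simeq (A\#U)\#_\tau\overline{H}$ with $\overline{H}=H/U^+H$ trivial, and invoke the trivial-Hopf-algebra results as the engine of the induction. The reduction steps you cite are all genuinely in the literature (cleftness of finite-dimensional extensions, transitivity of crossed products, semisimplicity of $U$ via $S^2=\mathrm{id}$ and of $\overline{H}$ via the image of the normalized integral), and your observation that the inner factor $A\#U$ is an honest smash product for the restricted action, so the inductive hypothesis applies to it directly, is what makes the induction close.

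The one place where your argument has a real hole rather than a routine verification is the final step in the dual case. When $\overline{H}\simeq k[G]$ the crossed product $B*_\tau G$ is still a finite normalizing extension of $B$ generated by invertible normalizing elements, and the Fisher--Montgomery/Lorenz--Passman argument does go through. But when $\overline{H}\simeq k[G]^*$, the algebra $B\#_\tau k[G]^*$ with a nontrivial weak action and cocycle is \emph{not} the smash product of a $G$-graded algebra by $k[G]^*$, so Theorem \ref{CohenMontgomeryTheorem} does not apply as stated, and the Morita-theoretic duality argument behind it does not transfer verbatim. What one needs is the statement that any cleft (equivalently, crossed-product) $k[G]^*$-extension of a semiprime base is semiprime; this is a theorem in its own right --- it is precisely the kind of statement Montgomery and Schneider prove by showing that trivial Hopf algebras have the required Krull relations for arbitrary $H$-Galois extensions, not just smash products. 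As written, your claim that this case ``follows via the Morita-theoretic framework'' is an assertion, not a proof, and it is the step that carries most of the actual content of the inductive step.
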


Note that Montgomery and Schneider's notion of strongly semisimple Hopf algebras (as used in \cite{MontgomerySchneider}) is presumably stronger than the one used in this survey. They call a Hopf algebra $H$ strongly semisimple if $P\cap A^H$ is semiprime for any prime ideal $P$ of a left $H$-module algebra $A$. They showed in  \cite[Theorem 8.10]{MontgomerySchneider} that $A\# H$ is semiprime for any $H$-prime left $H$-module algebra $A$. Hence it is not difficult to see that Montgomery and Schneider's condition implies condition (\ref{implication}).

The first semisimple non-semisolvable Hopf algebra had been found by D. Nikshych in \cite{Nikshych} as a Drinfeld twist of the group algebra of $A_5$ (see the next subsection for the definition of twists). The group algebra $H=k[A_5]$ is a simple Hopf algebra and a suitable twist will keep simplicity while turning $H$ into a non-trivial simple, semisimple Hopf algebra. It is possible to find such twists for all simple groups. The smallest semisimple non-semisolvable Hopf algebra has dimension $36$ and was found by S. Natale and C. Galindo in \cite{GalindoNatale}. There exists a twist of  $H=k[S_3\times S_3]$ which turns $H$ into a non-trivial semisimple Hopf algebra that is simple as a Hopf algebra and hence not semisolvable. 

\subsection{Drinfeld twists}
In the last section I show that twists of strongly semisimple Hopf algebras are strongly semisimple.

\begin{definition}
A \emph{twist} for a Hopf algebra $H$ is an invertible element 
 $J\in H\otimes H$, such that
$$(J\otimes 1)(\Delta \otimes 1)(J)  =(1\otimes J)(1\otimes \Delta)(J),$$
$$(\epsilon\otimes 1)(J) =1 =(1 \otimes \epsilon)(J)$$
holds.  Given such a twist $J$, the comultiplication of $H$ can be deformed to obtain a Hopf algebra  $(H,m,\Delta^J,\epsilon, S^J)$ with
$$\Delta^J(h) :=J\Delta(h)J^{-1}, \qquad S^J(h):=US(h)U^{-1}$$
for all $h\in H$ with $U:=m(1\otimes S)(J)$.
\end{definition}

Since the algebra structure of a twisted Hopf algebra $H^J$ is unchanged, it is clear that $H^J$ is semisimple if and only if $H$ is semisimple. Given a twist $J$ of a Hopf algebra $H$ and a left $H$-module algebra $A$ its possible  to deform the multiplication of $A$ to obtain a left $H^J$-module algebra on $A$
\begin{definition} Let $(A,\mu,\eta)$  be a left $H$-module algebra and $J$ a twist for $H$. The new multiplication on $A$ 
defined by
$$a \cdot_J b :=\mu^J(a\otimes b) := \mu( J^{-1} \cdot (a\otimes b)) \:\:\mbox{ for all }a,b \in A.$$
makes $A$ a left $H^J$-module algebra, where  $J^{-1}\in H\otimes H$ acts componentwise on $A\otimes A$. The new $H^J$-module algebra is denoted by $A^J$.
\end{definition}
An elementary result first proven by S. Majid in \cite{Majid} shows that the smash products  $A\#H$ and $A^J \# H^J$ are isomorphic as algebras.

\begin{theorem}[Majid, 1997]
$A\# H \simeq A^J \# H^J$ as algebras.
\end{theorem}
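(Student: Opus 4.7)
The plan is to write down an explicit linear isomorphism $\Phi\colon A \# H \to A^J \# H^J$ coming from the twist, and then verify multiplicativity by reducing it to the twist cocycle condition. Write $J = \sum J^{(1)} \otimes J^{(2)}$ and $J^{-1} = \sum \bar{J}^{(1)} \otimes \bar{J}^{(2)}$. I would define
\[
 \Phi(a \# h) := \sum (J^{(1)} \cdot a)\# J^{(2)} h,\qquad \Phi^{-1}(a \# h) := \sum (\bar{J}^{(1)}\cdot a)\# \bar{J}^{(2)} h.
\]
That these maps are mutually inverse follows from $JJ^{-1} = J^{-1}J = 1 \otimes 1$ together with associativity of the $H$-action on $A$, and $\Phi(1 \# 1) = 1 \# 1$ is exactly the normalization $(\epsilon \otimes 1)(J) = 1 = (1 \otimes \epsilon)(J)$.

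The substantive step is to verify $\Phi\bigl((a \# h)(b \# g)\bigr) = \Phi(a \# h)\cdot_{J} \Phi(b \# g)$ on generators. On the left, I would expand the product in $A \# H$ and then apply $\Phi$, using identity (\ref{modulealgebra1}) to distribute $J^{(1)}$ over the product $a(h_1 \cdot b)$; the resulting element is governed by $(\Delta \otimes 1)(J) \in H^{\otimes 3}$ acting (through the $H$-action in the first two slots and multiplication in $H$ in the third) on $a \otimes b \otimes g$, after threading $h_1$ and $h_2$ appropriately. On the right, I would expand using the twisted multiplication $\mu^J$ (which inserts $J^{-1}$ between $A$-factors) and the twisted comultiplication $\Delta^J(\cdot) = J \Delta(\cdot) J^{-1}$. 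In the expansion a $J^{-1}$ coming from $\Delta^J$ meets a $J$ coming from the second invocation of $\Phi$, and this pair cancels in the corresponding tensor slot of $H \otimes H$; what remains is
\[
 J^{-1}_{12}\,(1 \otimes J)\,(1 \otimes \Delta)(J) \in H^{\otimes 3}
\]
acting on $a \otimes b \otimes g$ in the same manner.

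The equality of the two sides therefore reduces to the identity
\[
 J^{-1}_{12}\,(1 \otimes J)\,(1 \otimes \Delta)(J) = (\Delta \otimes 1)(J),
\]
which is precisely the twist cocycle condition $(J \otimes 1)(\Delta \otimes 1)(J) = (1 \otimes J)(1 \otimes \Delta)(J)$ after multiplying on the left by $J^{-1}_{12}$. The main obstacle is purely notational: there are several Sweedler-style sums in play (two copies of $J$, one of $J^{-1}$, and one of $\Delta$), and the collapse of a $JJ^{-1}$ pair must be pinpointed in its correct tensor slot of $H \otimes H$ inside the larger expression. Once the indices are organized and this cancellation executed, the cocycle identity delivers the identification of both products in a single line, and Majid's theorem follows.
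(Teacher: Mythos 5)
Your proof is correct; note that the paper states this theorem without proof (citing Majid), so there is no in-paper argument to compare against, but your direct verification is the standard one. You have also handled the one delicate point correctly: with the paper's conventions $\mu^J(a\otimes b)=\mu(J^{-1}\cdot(a\otimes b))$ and $\Delta^J(h)=J\Delta(h)J^{-1}$, the map $A\# H\to A^J\# H^J$ must be built from $J$ (not $J^{-1}$) so that the $J^{-1}_{23}J_{23}$ pair coming from $\mu^J$ meeting the second copy of $\Phi$ actually cancels, after which the residual identity $J_{12}(\Delta\otimes 1)(J)=J_{23}(1\otimes\Delta)(J)$ is exactly the cocycle condition.
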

Note that twisting is an invertible operation and one has $H={H^{J}}^{J^{-1}}$.
It is not difficult to prove that $A^J$ is semiprime if $A$ is. Thus if $H$ is strongly semiprime and $A$ is a semiprime left $H^J$-module algebra, then $A^{J^{-1}}$ is a semiprime left $H$-module algebra. Hence $A^{J^{-1}}\# H$ is semiprime and by Majid's isomorphism also $A\# H^J$ is semiprime.
This shows the following Corollary which appeared in \cite{LompSmash}.
\begin{corollary}
The class of strongly semisimple Hopf algebras is closed under Drinfeld twists.
\end{corollary}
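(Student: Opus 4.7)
The plan is to apply Majid's theorem together with the fact that Drinfeld twisting is an invertible operation on pairs (Hopf algebra, module algebra). Since the underlying algebra of $H^J$ coincides with that of $H$, the Hopf algebra $H^J$ is finite dimensional whenever $H$ is, so it suffices to verify implication~(\ref{implication}) for $H^J$. Let $A$ be an arbitrary semiprime left $H^J$-module algebra. First I would record the easy structural facts that $J^{-1}$ is a twist for $H^J$, that $(H^J)^{J^{-1}}=H$, and that consequently $A^{J^{-1}}$ is a left $H$-module algebra with $(A^{J^{-1}})^{J}=A$; these follow directly from writing out the definitions and using that $J\cdot J^{-1}=1\otimes 1$ acts trivially on $A\otimes A$.

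The essential intermediate step is the claim that twisting preserves semiprimeness of module algebras, so that $A^{J^{-1}}$ is semiprime whenever $A$ is. Granting this, strong semisimplicity of $H$ applied to the semiprime left $H$-module algebra $A^{J^{-1}}$ gives that $A^{J^{-1}}\# H$ is semiprime. Majid's theorem, applied to $A^{J^{-1}}$ and the twist $J$ of $H$, then yields the algebra isomorphism
$$A^{J^{-1}}\# H \;\simeq\; (A^{J^{-1}})^{J}\# H^{J} \;=\; A\# H^{J},$$
so $A\# H^J$ is semiprime, which is exactly the condition defining strong semisimplicity of $H^J$.

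The only nontrivial ingredient is the preservation of semiprimeness under the twisted multiplication $a\cdot_{J^{-1}} b = \mu(J\cdot(a\otimes b))$. Since $J\in H\otimes H$ acts invertibly on $A\otimes A$, I would argue that a putative nonzero nilpotent ideal of index $n$ in $A^{J^{-1}}$ translates, via iterated use of the invertible action of an appropriate iterate of $J^{\pm 1}$ on $A^{\otimes n}$, into a nonzero nilpotent structure on the same underlying subspace inside $A$, contradicting semiprimeness of $A$. This is a short but slightly fiddly $k$-linear calculation, which is why the paper labels it as ``not difficult''; it is the main (though mild) obstacle in the plan.
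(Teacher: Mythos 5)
Your argument is exactly the paper's: invert the twist, use that $A^{J^{-1}}$ is a semiprime left $H$-module algebra, apply strong semisimplicity of $H$, and transport the conclusion back through Majid's isomorphism $A^{J^{-1}}\# H\simeq A\# H^J$. The one ingredient both you and the paper leave essentially unproved is that twisting preserves semiprimeness of the module algebra (the paper simply calls it ``not difficult''), so your sketch of that step, though vaguer than one would like about why an ideal of $A^{J^{-1}}$ yields nilpotence inside $A$, does not put you at a disadvantage relative to the text.
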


Let $k$ be an algebraically closed field of characteristic zero.  Etingof and Gelaki showed in \cite{EtingofGelaki00} that any semisimple triangular Hopf algebra over $k$ is a Drinfeld twist of a group algebra. Thus these Hopf algebras are strongly semisimple. Moreover S. Natale showed in her monograph \cite{Natale} that any semisimple Hopf algebra $H$ of dimension less than $60$ over $k$ is semisolvable or a Drinfeld twist of a semisolvable Hopf algebra or the dual of a semisolvable one.

\section{Conclusion}
\begin{corollary}
Cohen-Fishman's question has a positive solution (over field of characteristic 0) for 
\begin{enumerate}
\item any semisimple Hopf algebra $H$ that is a twist of a semisolvable Hopf algebra;
\item any semiprime module algebra $A$ that either satisfies a polynomial identity or has an Artinian ring of quotients.
\end{enumerate}
\end{corollary}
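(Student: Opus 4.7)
The plan is to assemble the corollary directly from results already established in the survey; no new ingredients are required, so the argument will essentially be a routing of citations with one minor subtlety to verify.

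For part (1), I would begin with the Montgomery--Schneider theorem, which guarantees that every semisimple semisolvable Hopf algebra $U$ is strongly semisimple. If $H\cong U^{J}$ for some Drinfeld twist $J$, the corollary following Majid's isomorphism $A\#H\simeq A^{J}\#H^{J}$ tells us that the class of strongly semisimple Hopf algebras is closed under twisting. Hence $H$ itself is strongly semisimple, and for every semiprime left $H$-module algebra $A$ the smash product $A\#H$ is semiprime.

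For part (2) in the PI case, I would simply invoke Linchenko and Montgomery's theorem, which states that whenever $H$ is semisimple and $A$ is a semiprime left $H$-module algebra satisfying a polynomial identity, $A\#H$ is semiprime. No further argument is needed beyond citing it. For the case in which $A$ admits a right Artinian classical ring of quotients $Q$, I would argue as follows. Because $A$ is semiprime, $Q$ is semisimple Artinian. By Skryabin and Van Oystaeyen's extension theorem, the $H$-action on $A$ extends uniquely to an $H$-action on $Q$. Corollary \ref{semisimple}(2) then gives that $Q\#H$ is semisimple Artinian, and in particular semiprime. Finally, $Q\#H$ is an Ore localization of $A\#H$ at the regular elements of $A$ (viewed inside $A\#H$ via $a\mapsto a\#1$), so $A\#H$ embeds into the semiprime ring $Q\#H$ and is itself semiprime.

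The only step that is not an outright citation is the last one: one must check that the regular elements of $A$ remain regular and form a (left and right) Ore set inside $A\#H$, so that the localization $(A\#H)[\{a\#1:a\ \text{regular}\}^{-1}]$ exists and coincides with $Q\#H$. This is exactly the verification carried out in Skryabin--Van Oystaeyen after extending the action, and it is the one place where the finite dimensionality of $H$ is used to control commutation of regular elements with elements of $H$. I expect this to be the main (mild) obstacle; everything else is immediate from the theorems stated above.
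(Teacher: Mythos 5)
Your proposal is correct and follows exactly the route the paper intends: part (1) is Montgomery--Schneider's theorem combined with the corollary that strong semisimplicity is preserved under Drinfeld twists, and part (2) is Linchenko--Montgomery for the PI case and the Skryabin--Van Oystaeyen extension theorem plus Corollary \ref{semisimple}(2) for the Artinian-quotient case, just as in the text. One small caution: the semiprimeness of $A\#H$ in the last step should be deduced from $A\#H$ being an \emph{order} in the semisimple Artinian ring $Q\#H$ (the Ore-localization fact you rightly flag as the point to verify), not merely from the embedding $A\#H\hookrightarrow Q\#H$, since a subring of a semiprime ring need not be semiprime.
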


In order to shed more light into Cohen-Fishman's question I suggest to consider the following questions/tasks:
\begin{enumerate} 
\item Do all semiprime module algebras over semisimple Hopf algebras have a large subalgebra of invariants ?
\item Find a semisimple Hopf algebra $H$ that is not a twist of a semisolvable Hopf algebra and a suitable $H$-action on  a semiprime algebra $A$. 

\item Extend Etingof-Walton's result of semisimple Hopf algebras on integral domains. What can be said about semisimple Hopf algebra actions on simple domains or free algebras in terms of their smash products?
\item Look at more general actions than Hopf algebra actions like actions of weak Hopf algebras, Hopfish algebras or  bialgebroids to find possible counterexamples. In particular does there exists a counterexample for Cohen-Fishman's question for the 13 dimensional weak Hopf algebra associated to the Lee-Yang fusion rule (see \cite[Section 5]{BohmSzlachanyi}) ?
\end{enumerate}

\begin{bibdiv}
 \begin{biblist}

\bib{Abe}{book}{,
    author={Abe, {E.}},
    title={Hopf Algebras},
    publisher={Cambridge University Press},
    year={1980},
    address={Cambridge}
}

\bib{BathurinLinchenko}{article}{
    author={Bahturin, {Y.A.}},
    author={Linchenko, {V.}},
    title={Identities of Algebras with Actions of Hopf Algebras},
    journal={J. Algebra},
    volume={202},
    year={1998},
    pages={634-654}
}

\bib{BeidarTorrecillas}{article}{
	author={Beidar, {K.I.}},
	author={Torrecillas, {B.}},
	journal={J. Pure Appl. Alg.},
	volume={161},
	year={2001},
	pages={13-30},
	title={On actions of Hopf algebras with cocommutative coradical}
}

\bib{Bergman}{article}{
    author={Bergman, {G.}},
    title={Everybody knows what a Hopf algebra is},
    journal={Contemp. Math.},
    volume={43},
    year={1985},
    pages={25-48},
    publisher={AMS},
}

\bib{BergmanIsaacs}{article}{
    author={Bergman, {G.M.}},
    author={Isaacs, {I.M.}},
    title={Rings with fixed-point-free group actions},
    journal={Proc. London Math. Soc.},
    volume={27},
    year={1973},
    pages={69--87}
}

\bib{BlattnerMontgomery}{article}{
    author={Blattner, {R.J.}},
    author={Montgomery, {S.}},
    title={A duality theorem for Hopf module algebras},
    journal={J. Alg.},
    volume={95},
    pages={153--172},
    year={1985}
   }
 
 \bib{BohmSzlachanyi}{article}{
	author={B\"ohm, {G.}},
	author={Szlach\'anyi, {K.}},
	title={A coassociative $C^*$-quantum group with nonintegral dimensions},
	journal={Lett. Math. Phys.},
	volume={35},
	year={1996},
	pages={437-456}
}
 
 \bib{BorgesLomp}{article}{
 	journal={Adv. Math. Phys.}, 
	volume={2011},
	pages={10 pages},
	title={Quantum Groupoids Acting on Semiprime Algebras},
	author={Borges, {I.}},
	author={Lomp, {C.}}
}

\bib{BrzezinskiWisbauer}{book}{
     author={Brzezinski, {T.}},
     author={Wisbauer, {R.}},
     title={Corings and Comodules}, 
     publisher={Cambridge University Press},
     series={London Mathematical Society Lectures Notes Series},
     volume={309},
     address={Cambridge},
     year={2003}
}

\bib{Cohen85}{article}{
    author={Cohen, {M.}},
    title={{Hopf algebras acting on semiprime algebras}},
    journal={Contemp. Math.},
    volume={43},
    year={1985},
    pages={49-61}
}

\bib{CohenFishman}{article}{
   author={Cohen, Miriam},
   author={Fishman, Davida},
   title={Hopf algebra actions},
   journal={J. Algebra},
   volume={100},
   date={1986},
   number={2},
   pages={363--379},
   issn={0021-8693},
   review={\MR{840582 (87i:16012)}},
   doi={10.1016/0021-8693(86)90082-7},
}

\bib{CohenFishmanMontgomery}{article}{
    author={Cohen, {M.}},
    author={Fishman, {D.}},
    author={Montgomery, {S.}},
    title={{Hopf Galois Extensions, Smash Products and Morita Equivalence}},
    journal={J.Algebra},
    volume={133},
    year={1990},
    pages={341-372}
}

\bib{CohenMontgomery}{article}{
   author={Cohen, {M.}},
   author={Montgomery, {S.}},
   title={Group-graded rings, smash products, and group actions},
   journal={Trans. Am. Math. Soc.},
   year={1984},
   pages={237-258},
   volume={282}
}

\bib{CohenRowen}{article}{
    author={Cohen, {M.}},
    author={Rowen, {L.H.}},
    title={Group graded rings.},
    journal={Comm. Algebra},
    volume={11},
    year={1983},
    pages={1253--1270}
}

\bib{ConnesMoscovici}{article}{
	author={Connes, {A.}},
	author={Moscovici, {H.}},
	title={Rankin-Cohen brackets and the Hopf algebra of transverse geometry}, 
	journal={Moscow Math. J.},
	volume={4},
	year={2004},
	pages={111-130}
}

\bib{EtingofGelaki}{article}{
    author={Etinghof, {P.}},
    author={Gelaki, {S.}},
	title={Semisimple Hopf algebras of dimension pq are trivial},
	journal={J.Algebra},
	volume={210},
	year={1998}, 
	pages={664-669}
}

\bib{EtingofGelaki00}{article}{
    author={Etinghof, {P.}},
    author={Gelaki, {S.}},
    title={The classification of triangular semisimple and cosemisimple Hopf algebras
        over an algebraically closed field},
    journal={Int. Math. Res. Notices},
    year={2000},
    pages={223--234},
}
\bib{EtingofWalton}{article}{
	author={Etingof, {P.}},
	author={Walton, {C.}},
	title={Semisimple Hopf actions on commutative domains},
	volume={arxiv.org/1301.4161v3},
	year={2013},
	type={preprint}
}

\bib{FisherMontgomery}{article}{
    author={Fisher, {J.W.}},
    author={Montgomery, {S.}},
    title={Semiprime skew group rings},
    journal={J. Algebra},
    volume={52},
    year={1978},
    pages={241--247}
}

\bib{GalindoNatale}{article}{ 
author={Galindo, {C.}},
author={Natale, {S.}},
title={Simple Hopf algebras and deformations of finite groups},
journal={Math. Res. Lett.},
volume={14},
year={2007},
pages={943-954}
}

\bib{HirataSugano}{article}{
    author={Hirata, {K.}},
    author={Sugano, {K.}},
    title={On semisimple extensions and separable extensions over non commutative rings},
    journal={J. Math. Soc. Japan},
    volume={18},
    year={1966},
    pages={360--373},
}

\bib{Kadison}{article}{
	author={Kadison, {L.}},
	title={Pseudo-Galois Extensions and Hopf Algebroids},
	booktitle={in Modules and Comodules, eds. Brzezinski, {T.} and Gomez Pardo, {J.L.} and Shestakov, {I.} and Smith, {P.F.}, Trends in Mathematics, Birkh\"auser},
	pages={247-264},
	year={2008}
}

\bib{Kaplansky}{techreport}{
    author={Kaplansky, {I.}},
    title={Bialgebras},
    institution={University of Chicago},
    year={1975},
    type={Lecture Notes}
}

\bib{KacPaljutkin}{article}{
author={Kac, {G.I.}},
author={Paljutkin, {V.G.}},
title={Finite ring groups.},
journal={Trans. Mosc. Math. Soc.},
volume={15 (1966)},
year={1967}, 
pages={251-294}
}

\bib{kirkman09}{article}{
	journal={J.Alg.},
	volume={322},
	year={2009},
	pages={3640-3669},
	title={￼￼Gorenstein subrings of invariants under Hopf algebra actions},
	author={Kirkman, {E.}},
	author={Kuzmanovich, {J.}},
	author={Zhang, {J.J.}}
}

\bib{LarsonSweedler}{article}{
    author={Larson, {R.G.} },
    author={Sweedler, {M.E.}},
    title={An associative orthogonal bilinear form for Hopf algebras},
    journal={Amer. J. Math.},
    volume={91},
    year={1969},
    pages={75-94}
}

\bib{LarsonRadford}{article}{
    author={Larson, {R.G.}},
    author={Radford, {D.E.}},
    title={Semisimple cosemisimple Hopf algebras},
    journal={Amer. J. Math.},
    volume={110},
    year={1988},
    pages={187-195}
}

\bib{Linchenko}{incollection}{
	author={Linchenko, {V.}},
	title={Nilpotent subsets of Hopf module algebras}, 
	booktitle={in Groups, Rings, Lie, and Hopf Algebras, eds. Bahturin, {Y.A.} et al, Kluwer},
	year={2003},
	pages={121-127}
}
\bib{LinchenkoMontgomery}{article}{
	author={Linchenko, {V.}},
	author={Montgomery, {S.}},
	title={Semiprime smash products and H-stable prime radicals for PI- algebras},
	journal={Proc. Amer. Math. Soc.},
	volume={135},
	pages={3091--3098},
	year={2007}
}

\bib{LompThesis}{thesis}{
   author={Lomp, {C.}},
   title={Primeigenschaften von Algebren in Modulkategorien \"uber Hopfalgebren},
   institution={Heinrich-Heine Universit\"at D\"usseldorf}, 
   address={Germany},
   type={PhD},
   year={2002}
 }

\bib{LompSmash}{article}{
	author={Lomp, {C.}},
	title={When is a smash product semiprime? A partial answer},
	journal={J. Alg.},
	volume={275},
	pages={339-355},
	year={2004}
}

\bib{Lomp04}{article}{
     author={Lomp, {C.}},
     title={Integrals in Hopf algebras over rings}, 
     journal={Comm. Alg.},
     volume={32},
     pages={4687-4711},
     year={2004}
 }
 
 \bib{LompCClosure}{article}{
 	journal={J. Pure. Appl. Alg.},
	volume={198},
	year={2005},
	pages={297-316},
	title={A central closure construction for certain algebra extensions. Applications to Hopf actions},
	author={Lomp, {C.}}
} 
 \bib{LompExtending}{article}{
     author={Lomp, {C.}},
     journal={Beitr\"age zur Algebra und Geometrie},
     volume={47},
     year={2006},
     pages={137-146},
     title={A Note on Extending Hopf Actions to Rings of Quotients of Module Algebras}
}

\bib{LorenzPassman}{article}{
    author={Lorenz, {M.}},
    author={Passman, {D.}},
    title={Two applications of Maschke's theorem},
    journal={Commun. Algebra},
    volume={8},
    year={1980},
    pages={1853-1866}
}

\bib{Majid}{article}{
    author={Majid, {S.}},
    title={Quasi-$\ast$-structure on q-Poincare Algebras},
    journal={J. Geom Phys.},
    volume={22},
    year={1997},
    pages={14--58},
}

\bib{Masuoka95}{article}{
	author={Masuoka, {A.}},
	title={Semisimple Hopf Algebras of Dimension $6,8$},
	journal={Israel J. Math.},
	volume={92},
	year={1995},
	pages={361-373}
}

\bib{Masuoka}{article}{
	author={Masuoka, {A.}},
	title={The $p^n$-th Theorem for Hopf algebras}, 
	journal={Proc. Amer. Math. Soc.},
	volume={124},
	year={1996},
	pages={187-195}
}

\bib{Matczuk91}{article}{
    author={Matczuk, {J.}},
    title={Centrally Closed Hopf Module Algebras},
    journal={Comm. Algebra},
    volume={19},
    year={1991},
    pages={1909-1918},
}

\bib{Montgomery}{book}{
    author={Montgomery, {S.}},
    title={Hopf Algebras and Their Actions on Rings},
    publisher={AMS},
    year={1992},
    volume={82},
    series={CBMS Regional Conference Series in Mathematics}
}

\bib{MontgomeryInvertible}{article}{
    Author = {Montgomery, {S.}},
    Title = {{Biinvertible actions of Hopf algebras.}},
    Journal = {{Isr. J. Math.}},
    ISSN = {0021-2172; 1565-8511/e},
    Volume = {83},
    Number = {1-2},
    Pages = {45--71},
    Year = {1993}
}

\bib{MontgomerySchneider}{article}{
    author={Montgomery, {S.}},
    author={Schneider, {H.-J.}},
    title={Prime ideals in Hopf Galois extensions},
    journal={Israel J. Math.},
    volume={112},
    year={1999},
    pages={187--235}
}

\bib{MontgomeryWhitherspoon}{article}{
	author={Montgomery, {S.}},
	author={Witherspoon, {S.}},
	title={Irreducible Representations of Crossed Products}, 
	journal={J. Pure Appl. Algebra}, 
	volume={111}, 
	year={1988}, 
	pages={381–385}
}

\bib{Natale}{book}{
	author={Natale, {S.}},
	title={Semisolvability of semisimple Hopf algebras of low dimension}, 
	series={Mem. Amer. Math. Soc.},
	publisher={Amer.Math.Soc},
	volume={186},
	year={2007}
}

\bib{Nikshych}{article}{
	author={Nikshych, {D.}},
	title={$K_0$-rings and twistings of finite dimensional semisimple Hopf algebras}, 
	journal={Comm. Alg.}, 
	volume={26},
	year={1998},
	pages={321-342}
}

\bib{OsterburgPassmanQuinn}{article}{
    author={Osterburg, {J.}},
    author={Passman, {D.S.}},
    author={Quinn, {D.}},
    title={A Connes spectrum for Hopf algebras},
    journal={Contemp. Math.},
    volume={130},
    year={1992},
    pages={311-334},
    publisher={AMS},
    booktitle={Collected papers in Memory of R.B. Warfield}
}

\bib{PanaiteVanOystaeyen}{article}{
	author={Panaite, {F.} },
	author={{Van Oystaeyen}, {F.}}, 
	title={Some Bialgebroids Constructed by Kadison and Connes-Moscovici are Isomorphic}, 
	journal={Appl. Cat. Struct.},
	volume={14},
	pages={627-632},
	year={2006}
}

\bib{Pareigis}{article}{
    author={Pareigis, {B.}},
    title={When Hopf algebras are Frobenius algebras},
    journal={J. Algebra},
    year={1971},
    pages={588-596}
}

\bib{Rumynin93}{article}{
    author={Rumynin, {D.}},
    title={Maximal Quotient Algebra of a Hopf-Module Algebra},
    journal={Algebra and Logic},
    volume={32},
    year={1993},
    pages={300--308}
}

\bib{Schneider}{techreport}{
    author={Schneider, {H.-J.}},
    title={Lectures on Hopf Algebras},
    institution={Univ. Cordoba},
    year={1995},
    type={Tabajos de Matematica, Serie ''B'', No. 31/95},
    address={Cordoba, Argentina}
}

\bib{Selvan}{thesis}{
    author={Selvan, {V.}},
    title={Study of torsion theories of Hopf module algebras and their smash products},
    institution={The Ramanujan Inst. Adv. Studies Math.},
    year={1994},
    address={University of Madras, India},
    type={PhD}
}

\bib{Sidorov}{article}{
   title={Globalization of local actions of coalgebras},
   author={Sidorov, {A. V.}},
   journal={Algebra and Logic},
   volume={35},
   year={1996},
   pages={105-119}
}

\bib{SkryabinVanOystaeyen}{article}{
    author={Skryabin, {S.}},
    author={{Van Oystaeyen}, {F.}},
    title={The Goldie Theorem for H -semiprime algebras},
    journal={J.Alg.},
    volume={305}, 
    year={2006},
    pages={292-320}
}

\bib{Sweedler}{book}{
    author={Sweedler, {M.E.}},
    title={Hopf Algebras},
    publisher={Benjamin},
    year={1969},
    address={New York}
}
\bib{Sweedler69}{article}{
    author={Sweedler, {M.E.}},
    title={Integrals for Hopf Algebras},
    journal={Ann. Math.},
    volume={89},
    year={1969},
    pages={323--335}
}

\bib{WisbauerPrime}{article}{
	author={Wisbauer, {R.}},
	title={Localization of modules and the central closure of rings},
	journal={Comm. Alg.},
	volume={9},
	year={1981},
	pages={1455-1493}
}

\bib{Zhu94}{article}{
	author={Zhu, {Y.}},
	title={Hopf algebras of prime dimension},
	journal={Inter. Math. Research Not.},
	volume={1},
	pages={53-59}, 
	year={1994}
}

\bib{Zhu96}{article}{
    author={Zhu, {S.}},
    title={Integrality of Module Algebras over its invariants},
    journal={J. Algebra},
    volume={180},
    year={1996},
    pages={187-205}
}

\end{biblist}
\end{bibdiv}
\end{document}